\renewcommand{\P}{\mathbb{P}}
\newcommand{\C}{\mathbb{C}}
\newcommand{\cC}{\mathcal{C}}
\newcommand{\Z}{\mathbb{Z}}
\newcommand{\Q}{\mathbb{Q}}
\newcommand{\I}{\mathcal{I}}
\renewcommand{\O}{\mathcal{O}}
\newcommand{\X}{\mathcal{X}}
\DeclareMathOperator{\Jac}{Jac}
\DeclareMathOperator{\Bl}{Bl}
\DeclareMathOperator{\Pic}{Pic}
\DeclareMathOperator{\gon}{gon}
\DeclareMathOperator{\covgon}{cov.gon}
\DeclareMathOperator{\conngon}{conn.gon}
\DeclareMathOperator{\irr}{irr}
\DeclareMathOperator{\uniirr}{uni.irr}
\DeclareMathOperator{\stabirr}{stab.irr}
\DeclareMathOperator{\corrdeg}{corr.deg}
\DeclareMathOperator{\Gr}{Gr}
\DeclareMathOperator{\Bir}{Bir}
\DeclareMathOperator{\Aut}{Aut}
\DeclareMathOperator{\Sym}{Sym}
\newcommand{\mybigwedge}{\raisebox{.25ex}{\scalebox{0.86}{$\bigwedge$}}}
\newtheorem{thm}{Theorem}[section]
\newtheorem{corollary}[thm]{Corollary}
\newtheorem{lemma}[thm]{Lemma}
\newtheorem{proposition}[thm]{Proposition}
\theoremstyle{definition}
\newtheorem{conjecture}[thm]{Conjecture}
\newtheorem{definition}[thm]{Definition}
\theoremstyle{remark}
\newtheorem{remark}[thm]{Remark}
\newtheorem{question}[thm]{Question}
\newtheorem{problem}[thm]{Problem}
\newtheorem{example}[thm]{Example}
\begin{document}

\title{A primer on measures of irrationality}
\author{Nathan Chen and Olivier Martin}

\allowdisplaybreaks

\thispagestyle{empty}

\maketitle

\tableofcontents

\section*{Introduction}

Rationality problems have been at the forefront of algebraic geometry for at least 150 years. The most classical of such problems is to determine which varieties are birational to projective space. Historically, in dimensions at least two, the subject began with the L\"{u}roth problem for surfaces, and the study of coarse invariants such as the space of holomorphic forms and the space of $m$-canonical global sections. This later paved the way for the Kodaira--Enriques classification of algebraic surfaces. Although questions concerning rationality and the behavior of these invariants in higher dimension are extremely subtle, there has been a flurry of activity in recent decades (see \cite{SurveyRationalityProblems, Debarre24} for a survey).

However, since most varieties are not rational, it is natural to ask how far a non-rational variety is from being rational. More generally one can even ask how birationally proximate two algebraic varieties are. Measures of irrationality are numerical invariants of algebraic varieties which quantify how far they are from being rational (or rationally connected, uniruled, etc.). The study of these invariants is a new area of research which is still in an exploratory phase. For lack of broadly applicable results, the current focus has been towards building a collection of examples and developing machinery to either obstruct the existence of certain rational maps or construct said maps. Some of the techniques have been inspired by obstructions to rationality, but often times new ideas are required to deal with higher degree maps. The purpose of this survey is to introduce and motivate measures of irrationality, summarize the current literature, and compile a list of open problems and directions for future research.

Recall that the \textit{gonality} of a smooth projective curve $C$, denoted $\gon(C)$, is defined as the minimal degree of a branched cover $C \longrightarrow \mathbb{P}^1$. One can extend this definition to integral curves $C$ by considering the minimal degree of a dominant rational map $C \dashrightarrow \P^{1}$. From the definition, we see that a smooth projective curve has gonality $1$ if and only if it is isomorphic to $\mathbb{P}^1$. Curves of genus $\geq 2$ and gonality $2$ are called hyperelliptic, and can be realized as double covers of $\mathbb{P}^1$ ramified over $r$ points, where $r>4$ is even. By Riemann--Hurwitz, the genus of such a curve is $g=(r-2)/2$. In particular, there exist hyperelliptic curves of arbitrarily large genus.

On the other hand, a curve $C$ of genus $g>0$ cannot have arbitrarily large gonality. Indeed, a well-known fact from Brill--Noether theory is that these invariants are related by the inequalities
\[ 2 \leq \gon(C) \leq \left\lfloor \frac{g+3}{2} \right\rfloor, \]
and every possible value in this range is achieved by some curve of genus $g$. In flat families of integral curves, gonality is a lower semicontinuous function in the Zariski topology. Inside the coarse moduli space $M_{g}$, curves of gonality $\leq k$ (for any integer $k\geq 2$) form an irreducible subvariety of dimension equal to $\text{min}(2g+2k-5, 3g-3)$. We recommend \cite{ACGH85} for a thorough treatment of gonality through the lens of Brill--Noether theory.

\begin{example}[M. Noether]\label{ex:noe}
A theorem of Max Noether (see \cite{MN} for the original source and \cite{Ci,Ha} for complete proofs) states that any smooth plane curve
\[ C \subset \P^{2} \quad \text{of degree} \quad d\geq 2 \]
has gonality $d-1$. Moreover, for $d \geq 3$ any map $C \rightarrow \P^{1}$ of degree $d-1$ is given by projection from a point. In light of the previous discussion on moduli, this illustrates the fact that plane curves are quite special since their gonality grows linearly in $d$, whereas their genus grows quadratically in $d$.
\end{example}

In higher dimensions, it is natural to ask for a suitable analogue of gonality and two main generalizations have been studied:

\begin{definition}
The \textit{degree of irrationality} of $X$ is 
\[ \irr (X) \coloneqq \min\{\deg \varphi \mid \exists \ \varphi: X\dashrightarrow \mathbb{P}^{\dim X} \text{ dominant} \}. \]
\end{definition}

\begin{definition}
The \textit{covering gonality} of $X$ is
\[ \covgon(X) \coloneqq \min\begin{cases} c \in \mathbb{Z}_{\geq 0}\;\Bigg|\begin{rcases} \text{ a general point } x\in X \text{ is contained }\\\text{ in an integral curve of gonality } c \end{rcases}.\end{cases} \]
\end{definition}

\begin{remark}
Over an arbitrary field $k$, the degree of irrationality can be defined algebraically as the smallest positive integer $d$ such that there exists a transcendence basis $\alpha_1,\ldots,\alpha_n$ of $k(X)/k$ satisfying $[k(X):k(\alpha_1,\dots, \alpha_n)]=d$. The covering gonality can be reinterpreted as the smallest positive integer $c$ such that there exists a diagram:
\begin{center}
\begin{equation}\label{diagcovgon}
\begin{tikzcd}
\cC \arrow[d, swap, "\pi"] \arrow[r, dashed, "f"] & X \\
B, & 
\end{tikzcd}
\end{equation}
\end{center}
where $\pi$ is a proper family of curves, the map $f$ is dominant and generically finite, and the general fiber of $\pi$ is a smooth curve of gonality equal to $c$
\end{remark}

These invariants only depend on $X$ up to birational equivalence. The degree of irrationality first appeared in a paper of Heinzer and Moh \cite{HM82}, in which the authors studied the degrees of extensions of function fields of varieties. The covering gonality seems to have been first studied by Lopez and Pirola \cite{LP} for very general surfaces $S \subset \P^{3}$.

Let us mention a few properties of these invariants:
\vspace{-1em}

\begin{enumerate}[wide, labelwidth=!, labelindent=0pt, label=(\textbf{\alph*})]
\item When $X$ is a curve, both the covering gonality and the degree of irrationality coincide with $\gon(X)$. Moreover, $\text{irr}(X)=1$ (resp. $\covgon(X) = 1$) if and only if $X$ is rational (resp. covered by rational curves, which is to say that $X$ is \textit{uniruled}).
\item Since a covering family of rational curves on $\mathbb{P}^{\dim X}$ can be pulled back under a dominant rational map $X\dashrightarrow \mathbb{P}^{\dim X}$ of degree $d$ to obtain a (birational) covering of $X$ by $d$-gonal curves, we obtain the inequality:
\begin{equation*}
\covgon(X) \leq \irr(X).
\end{equation*}
By Noether normalization we see that both invariants are finite.
\item If $Y\dashrightarrow X$ is a dominant generically finite rational map of degree $d$ then
\[ \text{irr}(Y)\leq d\cdot \text{irr}(X),\text{\qquad and \qquad}\text{cov.gon}(X)\leq \text{cov.gon}(Y)\leq d\cdot \text{cov.gon}(X). \]
The second equality uses the fact that curves $C,D$ satisfy $\gon(D)\leq \gon(C)$ if there is a dominant rational map $C\dashrightarrow D$.
\end{enumerate}

\begin{example}\label{ex:productcurves}
Let $C_1,C_2$ and be smooth projective curves. Then
$$\text{cov.gon}(C_1\times C_2)=\text{min}(\gon(C_1),\gon(C_2)).$$
On the other hand, the degree of irrationality of $C_1\times C_2$ is mysterious in general, despite the obvious upper bound coming from a product of maps to $\mathbb{P}^1\times \mathbb{P}^1$:
$$\text{irr}(C_1\times C_2)\leq \gon(C_1)\cdot \gon(C_2).$$
Given a dominant rational map of degree $d$
$$\varphi: C_1\times C_2\dashrightarrow \mathbb{P}^2,$$
the Zariski closure of the preimage of a general line $\ell \subset \mathbb{P}^2$ is an irreducible curve of gonality $d$ which dominates each factor under the projection maps. Since the gonality of a cover of a curve is bounded below by the gonality of the base curve, we deduce that
$$d=\gon(D)\geq \text{max}(\gon(C_1),\gon(C_2)),$$
and therefore
$$\max (\gon(C_1),\gon(C_2))\leq \text{irr}(C_1\times C_2)\leq \gon(C_1)\cdot \gon(C_2).$$
Of course this lower bound is very bad in general and the expectation is that if $C_1$ and $C_2$ are not rational and are general in moduli then the upper bound is attained. We will return to this question in \S\ref{prodsection}.
\end{example}

\begin{example}[Heinzer--Moh]\label{ex:HM}
If $f: X \dashrightarrow C$ is a dominant rational map from a variety to a curve, then $\irr(X) \geq \gon(C)$. Indeed, if $\varphi: X\dashrightarrow \mathbb{P}^n$ is generically finite of degree $d$ then we can find an irreducible rational curve $D\subset \mathbb{P}^n$ such that the strict transform $D'$ of $D$ is neither contained in the indeterminacy locus of $f$ nor a fiber of $f$. Hence $D'$ dominates $C$ and we deduce that
$$d\geq \text{gon}(D)=\text{gon}(D')\geq \gon(C).$$
\end{example}

\begin{remark} In contrast with the previous example, the existence of a dominant rational map $X \dashrightarrow Y$ to a higher dimensional variety $Y$ does not imply that $\irr(X) \geq \irr(Y)$. Indeed, the field of rationality problems provides many examples of varieties which are unirational, which is to say dominated by a projective space, yet not rational. For examples involving varieties with higher degrees of irrationality, one can take quotients of abelian surfaces or hyperelliptic surfaces (see e.g. Table~\ref{hyperirr} in \S\ref{kodsection} or \cite[Example 3]{Y2}).
\end{remark}

\begin{remark}\label{projection}
Let $X\subset \mathbb{P}^N$ be a non-degenerate subvariety of degree $d$ and dimension $n$. Projecting from the span of $(N-n)$ general points on $X$ gives a dominant rational map of degree $d - (N - n)$ to $\mathbb{P}^n$, so that $\irr(X) \leq d - (N - n)$. However, this approach is not expected to give good bounds in general. For example, it fails terribly for most curves embedded in $\P^{3}$ by very ample line bundles since the degree of the embedding will be too large. However, we will see later on that this bound is (near) optimal for very general hypersurfaces and complete intersections of large (multi)degree. One can obtain a slightly better bound on the covering gonality by taking highly tangent planes to $X$, but again these bounds are not expected to be good in general.
\end{remark}

\begin{example}\label{ex:IskMan}
A degree $2$ generically finite rational map $X\dashrightarrow \mathbb{P}^n$ gives rise to a birational involution of $X$. This makes showing that $\text{irr}(X)\geq 3$ for a large class of varieties quite easy since one can bring to bear the full power of modern birational geometry. For example, Iskovskikh--Manin \cite{IM71} prove that a smooth quartic threefold $Y$ is birationally rigid. In particular, $\text{Bir}(Y)=\text{Aut}(Y)$ and $\irr(Y) \not= 1$. If $Y$ is very general, then $\text{Aut}(Y)$ is trivial so $\text{irr}(Y)\geq 3$. Projecting from a point on $Y$ gives a degree $3$ rational map to $\mathbb{P}^4$, showing that $\text{irr}(Y)=3$.

\end{example}

\begin{example}\label{ex:maxalb}
Let $X$ be a smooth projective surface of maximal Albanese dimension. We will show that $\text{irr}(X)\geq 3$. Since $X$ has maximal Albanese dimension there are $1$-forms $\eta,\eta'\in H^{0}(X,\Omega^1_X)$ such that $\eta\wedge \eta'\neq 0$. Since $h^{p,0}(X) \coloneqq h^0(X,\Omega_X^p)$ is a birational invariant of smooth projective varieties, $X$ is not rational. Moreover, if there was a rational double cover $f: X\dashrightarrow \mathbb{P}^2$ with covering involution $\tau: X\dashrightarrow X$, then $\tau$ would act as $(-1)$ on the group $H^{0}(X,\Omega^1_X)$, therefore leaving $\eta\wedge \eta'$ invariant. It follows that this holomorphic $2$-form would descend to a non-zero section of $K_{\mathbb{P}^2}$, providing a contradiction. This example was generalized by Alzati--Pirola in \cite{AP2} (see Theorem \ref{APthm}).
\end{example}

It has proven quite difficult to obtain generally applicable lower bounds beyond $3$ for the degree of irrationality. From an algebraic perspective, the difficulty arises from the fact that extensions of degree $>2$ need not be Galois, so good knowledge of the birational automorphism group of $X$ is of little use. For instance, the authors do not know how to show that any K3 surface has degree of irrationality strictly larger than $3$.

One of the goals of this survey is to collect some of the (admittedly somewhat ad hoc) techniques which have been used to obtain bounds on measures of irrationality. One source of inspiration for techniques in the field of measures of irrationality are rationality problems. Indeed, obstructions to rationality can sometimes be generalized to obtain obstructions to the existence of low degree rational maps from a variety to projective space. For instance, if $X$ is a smooth projective variety such that $H^0(X,K_X)\neq 0$, then $X$ is not rational; we will see in \S\ref{postech} that if $K_X$ is suitably positive then $X$ cannot admit a generically finite rational map of low degree to a projective space. Similarly, a technique of Koll\'{a}r which was used to show irrationality of some Fano hypersurfaces has been adapted to the setting of the degree of irrationality by the first author and Stapleton (see \S 2.3 for details). It is therefore natural to ask which other obstructions to rationality can be adapted to the setting of measures of irrationality. In upcoming work, the second author will present an obstruction generalizing the decomposition of the diagonal together with applications, which though modest, unify some results in the literature. On the other hand, other techniques, such the Clemens--Griffiths method, do not seem to have natural generalizations.

In the first section, we will summarize the current state of knowledge on measures of irrationality for surfaces, following the Enriques--Kodaira classification. In the second section, we discuss positivity techniques and their application to measures of irrationality of hypersurfaces and complete intersections in projective space. In the third section, we address measures of irrationality of abelian and irregular varieties. In the last section, we conclude with some complements, conjectures, and open questions. We will mostly work over the complex numbers unless otherwise stated.

\vspace{0.5cm}

\section{Algebraic surfaces}

A first step towards understanding measures of irrationality is to determine what values these invariants can take for algebraic surfaces. In what follows, $S$ is a smooth projective surface and we review what is known about measures of irrationality for the different classes of the Enriques--Kodaira classification.

\subsection{Kodaira dimension $-\infty$}

\begin{itemize}[wide, labelwidth=!, labelindent=0pt, label=$\diamond$]
\item \textbf{Rational surfaces:} Both the covering gonality and the degree of irrationality are equal to $1$.
\item \noindent\textbf{Ruled surfaces:} $S$ is birational to $C\times \mathbb{P}^1$ for some curve $C$ of positive genus. Thus, from Example~\ref{ex:HM} we see that
\[ 1=\textup{cov.gon}(S)< \textup{irr}(S)=\textup{gon}(C). \]
\end{itemize}

\subsection{Kodaira dimension $0$}
\label{kodsection}
\begin{itemize}[wide, labelwidth=!, labelindent=0pt, label=$\diamond$] 
\item \textbf{K3 surfaces:} A theorem of Bogomolov and Mumford \cite[p. 351]{MM} states that a K3 surface $S$ is covered by (singular) elliptic curves. Since $S$ is not uniruled it follows that
\[ \textup{cov.gon}(S)=2. \]
To illustrate how little is known about the degree of irrationality of K3 surfaces, the authors do not know of a single example of a K3 surface with $\irr(S) \geq 4$.

In the remainder of this subsection, we will use $(S_d, L_d)$ to denote a very general polarized $K3$ surface of degree $d$. The main open question regarding measures of irrationality for K3 surfaces is the following:

\begin{conjecture}[Conjecture 4.2 in \cite{BDPELU}]\label{K3conj}
$$\limsup_{d\to\infty}\textup{irr}(S_d)=\infty.$$
\end{conjecture}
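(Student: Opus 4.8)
The plan is to first confront the reason the conjecture resists the positivity techniques of \S\ref{postech}, and then to replace canonical positivity by the holomorphic symplectic form. The Cayley--Bacharach method of \cite{BDPELU} bounds $\irr(X)$ from below through the geometry of $|K_X|$: the fiber $\varphi^{-1}(q)=\{x_1,\dots,x_\delta\}$ of a dominant map $\varphi\colon X\dashrightarrow\P^{\dim X}$ of degree $\delta$ satisfies the Cayley--Bacharach condition with respect to $|K_X|$, so the more points $|K_X|$ can separate, the larger $\delta$ must be. For a K3 surface $K_S=\O_S$ and $|K_S|$ consists of a single (nowhere vanishing) section, so this obstruction is entirely vacuous. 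This is the central difficulty, and it forces any viable argument to extract quantitative information from the class of the symplectic form $\omega\in H^{2,0}(S_d)$ rather than from the positivity of $K_{S_d}$.

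First I would reformulate a hypothetical bound $\irr(S_d)\le N$ cycle-theoretically. After resolving indeterminacy one obtains a morphism $\tilde\varphi\colon\tilde S\to\P^2$ whose general fiber is a reduced $0$-cycle of degree $N$. Since any two points of $\P^2$ lie on a common line $\ell$ and $\varphi^{-1}(\ell)\to\ell\cong\P^1$ has degree $N$, all of these fibers are rationally equivalent on $S_d$. Thus a small degree of irrationality produces a two-dimensional family of mutually rationally equivalent degree-$N$ zero-cycles whose union is all of $S_d$: a two-parameter subfamily contained in a single rational equivalence class and dominating the surface. The strategy is to play this against Mumford's theorem that $\operatorname{CH}_0$ of a surface with $p_g>0$ is not representable, with the hope that the symplectic form forces $N$, and hence $\irr(S_d)$, to grow with the degree $d$.

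The subtlety is that the naive, first-order form of this obstruction is the vanishing of the trace $\operatorname{Tr}_{\tilde\varphi}(\omega)\in H^0(\P^2,\Omega^2_{\P^2})=0$, which holds for any generically finite map and hence produces no bound; by the same trace argument used in Example~\ref{ex:maxalb} it only certifies that $S_d$ is not rational. To obtain a bound depending on $d$ one must feed in the Picard rank $1$ geometry of the very general $S_d$: since $\NS(S_d)=\Z\cdot L_d$, every curve $\varphi^{-1}(\ell)$ lies in a fixed linear system $|mL_d|$ with $m\ge1$, and if it is integral its normalization carries a $g^1_N$, so $\gon\le N$. A lower bound, growing with $d$, for the gonality of integral curves in $|mL_d|$ --- in the spirit of the gonality estimates for curves on K3 surfaces of Ciliberto--Knutsen --- would then force $N$ to be large.

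The hard part will be the degenerate members. Although the general smooth curve in $|L_d|$ has maximal gonality $\lfloor(g+3)/2\rfloor$, growing linearly in $d$, the specific curves $\varphi^{-1}(\ell)$ may be highly singular, reducible, or non-reduced once $m>1$, and their geometric genus --- hence gonality --- can collapse; ruling out a covering by such low-gonality degenerate members is exactly the quantitative content that the symplectic form must supply, and I expect it to demand a genuinely higher-order refinement of Mumford's argument rather than the bare trace computation. A complementary route is to exploit that these degree invariants should be lower semicontinuous, the very general member attaining the maximal value as for gonality, so that it would suffice to construct special K3 surfaces --- for instance Kummer surfaces, where $\omega$ and the zero-cycles are governed by an abelian surface --- of arbitrarily large degree with large $\irr$; here both inputs are currently missing, since no K3 surface is known to satisfy $\irr\ge4$, and even the requisite semicontinuity is delicate because the degree of a limiting map may drop. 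For this reason I would treat producing a single K3 surface with $\irr\ge4$ as the essential first milestone.
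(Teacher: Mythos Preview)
The statement you are attempting to prove is Conjecture~\ref{K3conj}, which the paper records as an \emph{open problem}; there is no proof in the paper to compare your attempt against. What you have written is not a proof but a research outline, and you say as much: you identify the ``essential first milestone'' as exhibiting a single K3 surface with $\irr\ge 4$, and you note that no such example is known. That milestone alone would already go beyond the current literature, as the paper explicitly remarks.

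On the strategy itself, the obstacle you file under ``degenerate members'' is more structural than you indicate. By Bogomolov--Mumford every K3 surface, in particular every $S_d$, is covered by singular elliptic curves, so $\covgon(S_d)=2$ independently of $d$. On a Picard rank one $S_d$ these elliptic curves necessarily lie in $|mL_d|$ for some $m$, so for that $m$ there is no lower bound on the gonality of integral curves in $|mL_d|$ that grows with $d$. Your argument therefore cannot proceed by bounding gonality uniformly across all $m$; it would have to pin down which class the specific curves $\varphi^{-1}(\ell)$ occupy and argue that \emph{that} class avoids the low-gonality members. Stapleton's upper bound $\irr(S_d)\le C\sqrt{d}$ is built precisely by exploiting such low-gonality curves, so any successful version of your approach must be fine enough to separate his construction from a hypothetical uniformly bounded one. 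Neither the cycle-theoretic reformulation via Mumford's theorem nor the trace of $\omega$ currently supplies that separation: as you correctly observe, the trace vanishes for formal reasons, and the infinite-dimensionality of $\mathrm{CH}_0$ does not by itself bound the degree of a single rational-equivalence class whose orbit covers $S_d$.
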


\noindent In the opposite direction, Stapleton shows the following result in his thesis:
\begin{thm}[\cite{Stap} Theorem 5.1]
There is a constant $C$ such that
$$\textup{irr}(S_d)\leq C\sqrt{d}.$$
\end{thm}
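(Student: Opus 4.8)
The plan is to construct, for each very general polarized $(S_d,L_d)$ with $L_d^2=d=2g-2$, a dominant rational map $\varphi\colon S_d\dashrightarrow\P^2$ of degree $O(\sqrt d)$; since $\irr(S_d)$ is by definition the minimum such degree, this suffices. Embed $S_d\hookrightarrow\P^g$ by $|L_d|$ as a surface of degree $d$. The naive constructions — projecting from the span of $g-2$ general points of $S_d$ as in Remark~\ref{projection}, or fibering $S_d$ by a Lefschetz pencil in $|L_d|$ and mapping the genus-$g$ fibers by a gonality pencil — only produce maps of degree $\sim g\sim d/2$. The first point to record is that this linear barrier is intrinsic to linear-series methods: any $\varphi$ has $\varphi^{*}\mathcal{O}(1)\in|nL_d|$ for some $n\geq 1$, so $\deg\varphi=n^{2}d-\operatorname{length}(Z)$ where $Z$ is the base scheme, while the requirement that the relevant series remain a net forces $Z$ to impose at most $h^{0}(nL_d)-3\approx n^{2}g$ conditions. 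Thus the degree cannot fall below $\sim g$ unless $Z$ is made enormously superabundant, i.e.\ its length vastly exceeds the number of conditions it imposes. Note also that one cannot cheat by specializing $(S_d,L_d)$ to a K3 with extra structure, since these invariants are lower semicontinuous in families, so the very general member must be treated directly.

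To break the barrier I would exploit the one feature special to K3 surfaces, namely that $S_d$ is swept out by a $1$-parameter family of (possibly singular) elliptic curves — the same Bogomolov–Mumford family responsible for $\covgon(S_d)=2$. The idea is to use these low-gonality curves to manufacture a highly superabundant base scheme. Concretely, I would place the base locus $Z$ not on a single hyperplane section (whose gonality is $\sim g/2$, and where the computation shows that achieving $\deg\varphi=\sqrt d$ would require a nonexistent $g^{1}_{\sqrt d}$), but distributed across $\sim\sqrt d$ members of the elliptic family, putting $\sim\sqrt d$ points on each. Because each such curve carries a degree-$2$ pencil, the divisors one cuts out on them are far from general, and the union becomes superabundant while remaining zero-dimensional and avoiding any fixed curve component; the square root emerges from optimizing the number of curves against the number of points per curve so that the total length approaches $d$ while the conditions imposed stay $\sim g$.

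The main obstacle, and the crux of the argument, is the passage from covering gonality to degree of irrationality. A covering family of elliptic curves comes with a generically finite map $\mathcal{C}\to S_d$ of large degree, and property (c) only transports gonality bounds along such maps in the wrong direction for $\irr$; there is no free mechanism turning ``$S_d$ is covered by gonality-$2$ curves'' into a single low-degree map $S_d\dashrightarrow\P^2$. I expect the real work to lie in (i) producing one honest net on $S_d$ whose base scheme is the superabundant configuration above, and (ii) verifying that the resulting $\varphi$ is genuinely generically finite onto $\P^2$ rather than composed with a pencil — precisely the degeneracy that the superabundance condition threatens to introduce. Controlling this, together with the bookkeeping that forces the degree to be $O(\sqrt d)$ rather than $O(d)$, is where the constant $C$ and the entire strength of the theorem reside.
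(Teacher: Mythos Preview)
The survey does not prove this statement --- it is cited from \cite{Stap} without argument --- so there is no in-paper proof to compare against; what follows compares your sketch to Stapleton's actual construction.

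Your proposal has a genuine gap at its core. You correctly diagnose that one needs a base scheme $Z$ whose contribution to the degree far exceeds the number of conditions it imposes on $|L_d|$, but the mechanism you propose --- placing $\sim\sqrt{d}$ reduced points on each of $\sim\sqrt{d}$ nodal elliptic curves --- does not deliver this. For a \emph{reduced} $Z$ the degree drop equals $|Z|$, so you would need $|Z|\approx d-\sqrt{d}$ while $Z$ imposes at most $g-2\approx d/2$ conditions: roughly $d/2$ dependent conditions. Your appeal to the $g^1_2$ on each $\widetilde{E}$ does not produce this. The restriction of $|L_d|$ to a member $E\in|L_d|$ is the complete canonical series $|\omega_E|$ on a curve of arithmetic genus $g$, and a divisor $D\subset E$ of degree $\ell$ fails to impose independent conditions on it only when $h^0(\mathcal{O}_E(D))\geq 2$; this is governed by the Abel--Jacobi map of the \emph{singular} curve $E$ and has nothing to do with the hyperelliptic pencil on its genus-one normalization. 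Nothing in your outline forces the required massive superabundance, and you say as much.

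Stapleton's construction bypasses reduced configurations entirely by concentrating the base scheme at a single \emph{infinitesimal} point. Fix $x\in S_d$ and consider $|L_d - mx|$, the curves with an $m$-fold point at $x$, for $m$ just below $\sqrt{d}$. Order-$m$ vanishing imposes $\binom{m+1}{2}\approx m^2/2$ conditions, so $h^0(L_d - mx)\gtrsim (g+1)-m^2/2$, which is of order $\sqrt{d}$; but two general members meet at $x$ with local intersection multiplicity $m^2$, so a general net in this system defines a rational map $S_d\dashrightarrow\mathbb{P}^2$ of degree at most $d-m^2=O(\sqrt{d})$. The square root thus emerges from the factor-of-two discrepancy between the length $\binom{m+1}{2}$ of the fat point (the conditions it imposes) and $m^2$ (its contribution to the intersection number) --- exactly the efficiency you were looking for, but obtained at one non-reduced point rather than spread across many curves. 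Verifying that such a net is not composed with a pencil and has zero-dimensional base locus is then routine on a K3 of Picard rank one, using the primitivity of $L_d$.
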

\noindent He also conjectures that this upper bound gives the correct asymptotic:
\begin{conjecture}[\cite{Stap}]
    Let $(S_{d}, L_{d})$ denote a very general polarized $K3$ surface of degree $d$. Then there exist constants $C_{1}, C_{2} > 0$ such that
    \[ C_{1} \sqrt{d} \leq \irr(S_{d}) \leq C_{2} \sqrt{d} \]
\end{conjecture}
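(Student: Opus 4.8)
The upper bound $\irr(S_d)\le C_2\sqrt d$ is precisely the preceding theorem of Stapleton \cite{Stap}, so the entire content of the conjecture is the lower bound $\irr(S_d)\ge C_1\sqrt d$, and this is where I would concentrate. The first move is to reformulate the problem in terms of covering families of gonal curves, in the spirit of the covering-gonality diagram \eqref{diagcovgon}. A dominant map $\varphi\colon S_d\dashrightarrow \P^2$ of degree $\delta$ is resolved by a birational morphism $\pi\colon \tilde S\to S_d$ into a base-point-free net inside a linear system $|\tilde M|$, where $\tilde M=\pi^*(kL_d)-\sum_i m_iE_i$ (here we use $\Pic(S_d)=\Z L_d$, so $M=kL_d$ for some $k\ge 1$), and one computes $\deg\varphi=\tilde M^2$. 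For a general line $\ell\subset\P^2$ the preimage $D=\overline{\varphi^{-1}(\ell)}$ is a smooth curve (by Bertini), irreducible for the minimal-degree $\varphi$, lying in $|\tilde M|$, and the composite $D\to\ell\cong\P^1$ is a base-point-free pencil of degree $\tilde M^2$; as $\ell$ varies, the curves $D$ sweep out a two-dimensional (net) family covering $S_d$. Since $\gon(D)\le\delta=\deg\varphi$, it suffices to prove a uniform lower bound $\gon(D)\ge C_1\sqrt d$ for the normalization of every member of a covering net on $S_d$.

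To bound such a gonality from below I would run the Lazarsfeld--Mukai method on $S_d$ itself. A base-point-free $g^1_\delta$ on $D$ determines, via the Lazarsfeld--Mukai construction, a rank-two sheaf $F$ on $S_d$ with $c_1(F)=kL_d$ and $c_2(F)=\delta$. If $F$ were $\mu$-stable, the Bogomolov inequality $c_1(F)^2\le 4c_2(F)$ would yield $\delta\ge k^2d/4$, a bound linear in $d$; as this contradicts the upper bound, $F$ must be unstable, and by $\Pic(S_d)=\Z L_d$ its destabilizing subsheaf is some $\O_{S_d}(aL_d)$. The resulting exact sequence constrains $a$, $k$ and $\delta$, and the heuristic is that optimizing these constraints --- the instability forced by the rank-one Picard lattice, balanced against the inequality $c_1^2\le 4c_2$ that still governs the graded pieces --- produces exactly the square-root scaling $\delta\gtrsim\sqrt d$. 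It is worth stressing that the differential-forms/positivity technique underlying the hypersurface lower bounds in \S\ref{postech} is simply unavailable here: it feeds on the positivity of $K_X$, whereas $K_{S_d}=\O_{S_d}$ is trivial, which is the structural reason K3 surfaces resist the standard methods.

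The hard part is making the second step uniform and rigorous, and I expect it to be the decisive obstacle. Three difficulties compound. First, the image curves $\pi(D)\in|kL_d|$ are genuinely singular, so one must run the Lazarsfeld--Mukai machinery for pencils on normalizations of singular curves in a fixed class, where stability and the precise shape of the destabilizing sequence are far more delicate than in the classical smooth case. Second, the estimate must hold uniformly as $d\to\infty$ and over all $k$, which amounts to ruling out an unforeseen family of highly singular, low-gonality curves sweeping out $S_d$ with $\tilde M^2=o(\sqrt d)$ --- and nothing in the rank-one Picard hypothesis obviously forbids this a priori. Third, one must genuinely exploit the two-dimensionality of the net: the Bogomolov--Mumford theorem \cite{MM} already furnishes a one-dimensional covering family of geometric-genus-one curves (whence $\covgon(S_d)=2$), yet their degree-two pencils do not globalize into a net dominating $\P^2$, and it is exactly this failure to globalize that must be quantified. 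The depth of the gap is underscored by the fact, noted above, that one does not currently know a single K3 surface with $\irr\ge 4$; a successful execution of this plan would in particular settle that question, so I regard the uniform gonality estimate of the second step as the true heart of the conjecture and the point at which my plan, as stated, still has a real gap.
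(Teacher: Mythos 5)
The statement you were asked to prove is not a theorem of the paper: it is Stapleton's conjecture, recorded in the survey as an open problem, and the paper contains no proof of it. The only proved part is the upper bound, which is Stapleton's theorem quoted immediately before the conjecture, and which you correctly identify and cite rather than reprove. So everything rests on your plan for the lower bound, and that plan does not close --- as you concede in your final sentence --- but the flaw is more structural than the ``remaining gap'' you describe. Your first paragraph asserts that it ``suffices to prove a uniform lower bound $\gon(D)\ge C_1\sqrt d$ for the normalization of every member of a covering net on $S_d$.'' Read broadly (arbitrary two-dimensional covering families), this sufficient statement is false, so no Lazarsfeld--Mukai analysis can ever establish it: on a polarized K3 surface the Severi varieties are nonempty of the expected dimension, so the locus of curves in $|kL_d|$ of geometric genus $2$ is two-dimensional; its members sweep out $S_d$ and their normalizations are genus-$2$, hence hyperelliptic, curves. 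That is a covering net of gonality-$2$ curves, on every $S_d$, for every $d$. Read narrowly (only nets of the form $\overline{\varphi^{-1}(\ell)}$ for a map $\varphi$ to $\P^2$), the ``reduction'' is vacuous --- it restates the conjecture --- and your subsequent Lazarsfeld--Mukai/Bogomolov step, which uses only that the curves lie in $|kL_d|$ and carry a $g^1_\delta$, collides with the same Severi counterexample. Either way, the numerical data you feed into the bundle construction can never distinguish a low-degree map to $\P^2$ from the low-gonality covering families that genuinely exist.

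The information that must be exploited is precisely the one you defer to your ``third difficulty'': that the $g^1_\delta$'s on the members are cut out coherently by a single rational map, i.e.\ that the pencils globalize. Acknowledging this contradicts your stated reduction rather than repairing it, and no mechanism is proposed for using it. There is also a technical obstruction in the intermediate step: the Lazarsfeld--Mukai construction attaches a bundle to a base-point-free pencil on a \emph{smooth} curve lying on the surface, whereas here the curves are smooth only on the blow-up $\tilde S$ (not a K3) and singular on $S_d$, with the pencil living on the normalization; this is exactly the regime where the classical argument breaks, and why normalizations of singular curves in $|kL_d|$ can have gonality $2$ even though smooth members are Brill--Noether general by Lazarsfeld's theorem. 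In short, your proposal is a sensible heuristic for the expected $\sqrt d$ growth, and its diagnosis of why positivity methods from \S\ref{postech} fail for $K_{S_d}=\O_{S_d}$ is correct, but it proves nothing beyond what is already cited from \cite{Stap}, and its central reduction is either false or circular.
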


As a first step towards these questions, one can ask:

\begin{question}
Is there an infinite subset $I\subset 2\mathbb{Z}_{>0}$ and a constant $M>0$ such that $\text{irr}(S_d)\leq M$ for all $d\in I$? As pointed out by Ottem and Gounelas, this is true if one replaces $\text{irr}(S_d)$ by $\text{irr}(S_d^{[2]})$. In the latter case one can take the set
\[ I=\{2(n^2+n+1): n\in \mathbb{Z}_{\geq 2}\}. \]
By \cite[Theorem 6.1.4]{Has} the hyper-K\"ahler fourfold $S_d^{[2]}$ is isomorphic to the Fano variety of lines on a smooth cubic fourfold. Of course, the degree of the image of the Fano variety of lines under the Pl\"ucker embedding is independent of the choice of the cubic fourfold and provides and upper bound on the degree of irrationality.
\end{question}

We will see in \S\ref{section:VariationalProperties} that the degree of irrationality of a K3 surface can only drop under specialization. Finally, we remark that there are several instances of $K3$ surfaces which have low degree of irrationality. For instance, the Enriques-Campedelli Theorem characterizes K3 surfaces with degree of irrationality $2$ as those containing a smooth hyperelliptic curve (see \cite{Dolg,Reid76} for modern references) and the first author \cite{Chen19} showed that a very general Kummer surface has degree of irrationality equal to $2$. Moretti and Rojas \cite{MR25} show that K3 surfaces of degree up to $14$ have degree of irrationality at most 4.
\item \textbf{Abelian surfaces:}\label{surfclass} For every abelian surface $A$, we have the inequalities
\begin{equation*}2=\textup{cov.gon}(A)< \textup{irr}(A)\leq 4.\end{equation*}
The upper bound follows from the fact that any Kummer surface has degree of irrationality equal to 2. Tokunaga and Yoshihara prove in \cite{TY} that the degree of irrationality of an abelian surface containing a smooth genus $3$ curve is $3$. In particular, very general $(1,2)$-polarized abelian surfaces have degree of irrationality $3$, as do some products of elliptic curves \cite{Y}. In \cite{M2}, the second author shows that the degree of irrationality of a very general $(1,d)$-polarized abelian surface is $4$ provided that $d$ does not divide $6$. Moretti then showed in \cite[Theorem C(2)]{Moretti23} that the degree of irrationality of a very general $(1,6)$-polarized abelian surface has degree of irrationality $3$, see \S \ref{polirr}. To the best of our knowledge, the degree of irrationality of very general $(1,1)$ and $(1,3)$ polarized abelian surfaces remains unknown. It is also unknown whether the degree of irrationality of a product of two very general elliptic curves is three or four.
\item \textbf{Enriques surfaces:} Enriques surfaces are branched double covers of the plane (\cite{E}, see also \cite{D}). Since they are not ruled, their covering gonality and degree of irrationality are both equal to $2$.
\item \textbf{Hyperelliptic surfaces:} Yoshihara has studied the degree of irrationality of hyperelliptic surfaces in \cite{Y4}. A hyperelliptic surface $S$ is a quotient variety of the form $(E\times F)/G$, where $E$ and $F$ are elliptic curves and $G$ is a subgroup of $F$ which acts on $F$ by translations and acts on $E$ arbitrarily. There are $7$ families of hyperelliptic surfaces and their degrees of irrationality are listed in Table \ref{hyperirr}. Note that if $j(E)=0$ or $1728$, then $E$ is respectively $\C/\mathbb{Z}[\omega]$ and $\C/\mathbb{Z}[i]$, where $\omega$ is a primitive third root of unity. In these cases, $\textup{Aut}(E)=\langle -\omega\rangle=\mathbb{Z}/6\mathbb{Z}$ and $\textup{Aut}(E)=\langle i\rangle=\mathbb{Z}/4\mathbb{Z}$ respectively. The degree of irrationality of the fourth type of hyperelliptic surface was only recently settled in \cite{mason25}, see \S\ref{polirr}.
\end{itemize}

\begin{center}
\begin{tabular}{|c|c|c|c|c|}
\hline
Order of $K_X$& $j(E)$ & $G$ & Action of $G$ on $E$ &$\textup{irr}(S)$\\
\hline
2 & Any & $\mathbb{Z}/2\mathbb{Z}$ & $e\mapsto -e$ & 2\\
2 & Any & $\mathbb{Z}/2\mathbb{Z}\times \mathbb{Z}/2\mathbb{Z}$ & $e\mapsto -e$, $e\mapsto e+c$, $c\in E[2]\setminus\{0_E\}$ & 2\\
3& 0 & $\mathbb{Z}/3\mathbb{Z}$ & $e\mapsto \omega e$ & 3\\
3 & 0& $\mathbb{Z}/3\mathbb{Z}\times \mathbb{Z}/3\mathbb{Z}$ & $e\mapsto \omega e$, $e\mapsto e+c$, $\omega c= c$ & 3\\
4 & 1728 & $\mathbb{Z}/4\mathbb{Z}$ & $e\mapsto ie$ & 3\\
4 & 1728 & $\mathbb{Z}/4\mathbb{Z}\times \mathbb{Z}/2\mathbb{Z}$ & $e\mapsto ie$, $e\mapsto e+c$, $ic=c$& 3\\
6 & 0 & $\mathbb{Z}/6\mathbb{Z}$ & $e\mapsto -\omega e$ & 3\\
\hline
\end{tabular}
\captionof{table}{Degree of irrationality of hyperelliptic surfaces}\label{hyperirr}
\end{center}

\subsection{\textbf{Kodaira dimension} $1$.}

Such surfaces are elliptic, i.e., they admit a fibration $S \to C$ whose general fiber is a smooth genus 1 curve (here we do not assume the existence of a section). We review what is known about measures of irrationality for elliptic surfaces regardless of Kodaira dimension. Clearly, if $S\to C$ is an elliptic surface then $\textup{cov.gon}(S)\leq 2$ and
\[ \textup{conn.gon}(S) \geq \textup{gon}(C). \]
Here, the \textit{connecting gonality} $\textup{conn.gon}(S)$ is the minimal gonality of a $2$-parameter family of curves on $S$ (the precise definition will appear in \S \ref{subsec:OtherMeasures}).

Yoshihara shows in Proposition 1 of \cite{Y} that if $S\to C$ has a section then
$$\textup{irr}(S)\leq 2\;\textup{gon}(C).$$
Of course, one can obtain more general results by considering the Jacobian fibration $J(S)\to C$ associated to an elliptic fibration $S\to C$. Recall that the index of an elliptic fibration $S\to C$ is the positive generator of the subgroup of $\Z$ generated by the degrees of intersections of curves on $S$ with fibers of the fibration. If $S$ admits an elliptic fibration of index $d$ there is a dominant rational map of degree $d^2$ (see \cite{BKL} p. 138):
$$S\dashrightarrow J(S).$$
Since $J(S)\to C$ is an elliptic fibration with a section, $\textup{irr}(J(S))\leq 2\;\textup{irr}(C)$ and
$$\textup{irr}(S)\leq 2d^2\; \textup{irr}(C).$$
Unfortunately, we know very little in terms of lower bounds for elliptic surfaces.

\subsection{Surfaces of general type}

As the topography of general type surfaces is too vast, we will just highlight a few interesting examples. Smooth surfaces in $\P^{3}$ of degree $\geq 5$ will be discussed in \S\ref{postech} whereas surfaces of maximal Albanese dimension will appear in \S\ref{abvar}. 

\begin{itemize}[wide, labelwidth=!, labelindent=0pt, label=$\diamond$] 
\item \textbf{Symmetric squares of curves:}

For any $k \geq 2$, the covering gonality of $\Sym^{k}C$ is bounded from above by $\gon(C)$ since one can cover $\Sym^{k}C$ by curves of the form $p_{1} + \cdots + p_{k-1} + C$.

Bastianelli proves in \cite{B} that if $g\geq 3$ the covering gonality of $\textup{Sym}^2 C$ coincides with the gonality of $C$. The covering gonality of the $k$-fold symmetric product of curves of dimension has also studied in \cite{BP25,BP24} for $k=2,3,4$.

\begin{problem}
Generalize the work of \cite{BP25} to $\text{Sym}^k C$ for $k\geq 5$.
\end{problem}

On the other hand, there are no conjectures about degree of irrationality in either direction which are expected to be optimal, at least for general curves. Symmetrizing any gonal map gives a dominant map $\Sym^{k}C \rightarrow \Sym^{k}\P^{1} \cong \P^{k}$, which implies that
\[ \irr(\Sym^{k}C) \leq \gon(C)^{k}. \]
In \cite{B}, Bastianelli studies measures of irrationality for the symmetric square of a smooth curve $C$ of genus $g$. In terms of upper bounds, he shows that 
$$\textup{irr}(\textup{Sym}^2 C)\leq \min\left\{\textup{gon}(C)^2, \ \frac{\delta_2(\delta_2-1)}{2}, \ \frac{(\delta_3-1)(\delta_3-2)}{2}-g\right\},$$
where $\delta_i$ is the minimal positive integer $d$ such that $C$ is birational to a non-degenerate curve of degree $d$ in $\mathbb{P}^i$.

Moreover, if $C$ is very general, then $\textup{irr}(\textup{Sym}^2 C)\geq g-1$. Bastianelli also proves that if $C$ is hyperelliptic then 
\begin{align*}\textup{irr}(\textup{Sym}^2 C)&\in \{3,4\} \qquad\text{ if }g\geq 2,\\
\textup{irr}(\textup{Sym}^2 C)&=4 \qquad\;\;\;\;\;\;\;\text{ if }g\geq 4.\end{align*}
In upcoming work, the second author will prove that $\text{irr}(\text{Sym}^2 C)\geq 4$ if $C$ is genus $3$ curve, and therefore that the degree of irrationality of the theta divisor of a genus $3$ hyperelliptic Jacobian is $4$.

\begin{question}
If $C$ is a general genus $3$ curve the symmetric product $\text{Sym}^2C$ is isomorphic to the theta divisor of $J(C)$. The Gauss map of this divisor is a dominant rational map of degree $6$ to $\mathbb{P}^2$. Is this a minimal degree dominant rational map to $\mathbb{P}^2$? If so, is it the unique such dominant rational map of degree $6$?
\end{question}

\item \textbf{Fano surface of a cubic 3-fold:} In \cite{GK19}, Gounelas and Kouvidakis study measures of irrationality of $S$, the Fano surface of lines on a smooth cubic $3$-fold $X$. They show the following inequalities: 
$$3\leq \textup{cov.gon}(S)\leq \textup{irr}(S)\leq 6.$$
Moreover, if $X$ is very general, then
\[ 4= \covgon(S)\leq 5=\conngon(S)\leq 6= \textup{irr}(S). \]
See the previous subsection for the definition of connecting gonality.

\begin{question}
Is there a smooth cubic threefold whose Fano surface of lines $S$ satisfies one of $\textup{cov.gon}(S)=3$, $\conngon(S)<5$, or $\textup{irr}(S)<6$? What can be said for higher-dimensional cubic hypersurfaces?
\end{question}

\end{itemize}

\vspace{0.5cm}


\section{Positivity techniques}
\label{postech}

There has been a significant amount of activity around the application of positivity techniques to measures of irrationality, with the focus being on hypersurfaces and complete intersections. Noether's theorem for the gonality of plane curves was first generalized to smooth surfaces in $\P^{3}$ by Lopez--Pirola \cite{LP} (for the covering gonality) and in the thesis of Cortini \cite{Cortini00} (for the degree of irrationality). Bastianelli--Cortini--De Poi \cite{BCD14} later revisited it for smooth surfaces and threefolds of large degree. A few years later, Bastianelli--De Poi--Ein--Lazarsfeld--Ullery \cite{BDELU17} computed the degree of irrationality of very general hypersurfaces in any dimension (and also gave bounds for the covering gonality of any smooth hypersurface). Smith extended some of these results to positive characteristic \cite{Smith20}. In a somewhat different direction, Chen--Stapleton \cite{CS20} established lower bounds for Fano hypersurfaces by degenerating to positive characteristic (following ideas of Koll\'{a}r \cite{Kollar95}). For complete intersections in projective space, it was conjectured in \cite{BDELU17} that measures of irrationality should behave multiplicatively in the degrees of the defining equations. This was in part inspired by a computation of Lazarsfeld for the gonality of smooth complete intersection curves \cite{Laz97}. Partial progress was made in this direction in a series of papers by Stapleton \cite{Stap}, Stapleton--Ullery, \cite{SU20}, Chen \cite{Chen24}, and Levinson--Stapleton--Ullery \cite{LSU23}. The conjecture was recently confirmed by Chen--Church--Zhao \cite{CCZ24}.

Let us begin by explaining how positivity considerations lead to lower bounds for the gonality of curves. This idea will turn out to be crucial in higher dimensions.

\begin{definition}
Let $X$ be a smooth projective variety, let $L$ be a line bundle on $X$, and let $U \subset X$ be a subset. We say that \textit{sections of $L$ separate $r$ distinct points of $U$} if the restriction map
\[ H^{0}(X, L) \rightarrow H^{0}(Z, L \big|_{Z}) \]
is surjective for any set $Z \subset U$ consisting of $r$ distinct points.
\end{definition}

Often times we will simply say that $L$ separates $r$ points in $U$. The following lemma provides lower bounds for the gonality of any curve (as an exercise, one can use this to establish Noether's theorem in Example~\ref{ex:noe}).

\begin{lemma}\label{lem:gonalityCurves}
    Let $C$ be a smooth projective curve and suppose $K_{C}$ separates $r$ points on some subset $C \setminus V$, where $V$ is a countable subset of closed points. Then $\gon(C) \geq r+1$.
\end{lemma}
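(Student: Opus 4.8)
The plan is to argue by contradiction: assuming $\gon(C) = d \le r$, I will produce a set of $r$ distinct points in $C \setminus V$ on which $K_C$ fails to be separating, contradicting the hypothesis. A degree-$d$ map realizing the gonality is a base-point-free pencil, giving a dominant morphism $\phi \colon C \to \P^1$ whose fibers $D_t = \phi^{-1}(t)$ are effective divisors of degree $d$, all linearly equivalent and moving in a linear system of dimension $\ge 1$.

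The key input is the classical computation, via Riemann--Roch and Serre duality, of the number of conditions an effective divisor imposes on the canonical system. For an effective divisor $E$ on $C$, the rank of the restriction map $H^0(C, K_C) \to H^0(E, K_C|_E)$ equals $g - h^0(K_C - E)$, and since Serre duality gives $h^0(K_C - E) = h^1(E) = h^0(E) - \deg E + g - 1$, this rank is exactly $\deg E - \dim |E|$. Applying this to a fiber $E = D_t$, whose degree is $d$ and which moves in a linear system of dimension $\ge 1$, I find that $D_t$ imposes at most $d - 1$ conditions on $|K_C|$; in particular the $d$ points of a reduced fiber never impose independent conditions on $K_C$.

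With this in hand, I would choose a reduced fiber $D_t = \{p_1, \dots, p_d\}$ and then $r - d$ further distinct points $q_1, \dots, q_{r-d}$, arranging for all of these to lie in $C \setminus V$. Setting $Z = \{p_1, \dots, p_d, q_1, \dots, q_{r-d}\}$, a set of $r$ distinct points, I bound the conditions imposed by $Z$: adjoining a single point to a finite set can raise the number of imposed conditions by at most one, so $Z$ imposes at most $(d-1) + (r-d) = r-1$ conditions. Hence $H^0(C, K_C) \to H^0(Z, K_C|_Z)$ has image of dimension at most $r - 1 < r = \dim H^0(Z, K_C|_Z)$ and is not surjective, contradicting the assumption that $K_C$ separates $r$ points on $C \setminus V$. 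Therefore $\gon(C) \ge r + 1$.

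The step requiring the most care --- and the reason the hypothesis only asks for separation away from a countable set $V$ --- is the placement of all chosen points inside $C \setminus V$. Since $C \setminus V$ is the complement of a countable set in an (uncountable) complex curve, choosing the $r - d$ extra points there is automatic. For the fiber, the divisors $D_t$ meeting $V$ correspond to $t \in \phi(V)$, a countable subset of $\P^1$, while non-reduced fibers occur only over the finite branch locus of $\phi$; avoiding this countable-plus-finite set of parameters, a general $t$ yields a reduced fiber $D_t$ entirely contained in $C \setminus V$. This is precisely where the countability of $V$ enters, and it is the only genuinely non-formal point in the argument.
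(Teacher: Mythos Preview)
Your argument is correct and follows essentially the same route as the paper's (commented-out) proof: both proceed by contradiction, take a base-point-free $g^1_d$ with $d\le r$, and use Serre duality/Riemann--Roch to show that a general reduced fiber fails to impose independent conditions on $|K_C|$. You are slightly more explicit in two places --- padding the fiber with $r-d$ extra points so that the contradicting set has exactly $r$ elements, and spelling out why countability of $V$ lets you choose a reduced fiber in $C\setminus V$ --- but these are cosmetic refinements of the same idea rather than a different approach.
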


\begin{remark}
In \cite{BDELU17}, the authors defined a slightly stronger notion of separating finite subschemes called $\mathrm{BVA}_{p}$, but it was pointed out in \cite{Stap} that the weaker notion still leads to lower bounds for gonality.
\end{remark}

Given a complete intersection $X \subset \P^{n+r}$ cut out by hypersurfaces of degrees $d_{1}, \ldots, d_{r}$, one can always project away from points on $X$ to obtain the upper bound
\[ \covgon(X) \leq \irr(X) \leq d_{1} \cdots d_{r} - r. \]
For lower bounds, a basic observation we will begin with is that the positivity of the canonical linear series plays an important role in studying measures of irrationality on hypersurfaces and complete intersections. One analogue of Lemma~\ref{lem:gonalityCurves} for higher-dimensional varieties is:

\begin{proposition}\label{prop:covgon}
    Let $X$ be a smooth projective variety. If $K_{X}$ separates $r$ points on $X \setminus V$, where $V$ is a countable union of proper subvarieties, then
    \[ \covgon(X) \geq r+1. \]
\end{proposition}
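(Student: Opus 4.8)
The plan is to reduce to the curve case of Lemma~\ref{lem:gonalityCurves} by spreading the separation hypothesis over a covering family. Suppose for contradiction that $c := \covgon(X) \leq r$. Using the reformulation of covering gonality recorded above, there is a diagram $\pi \colon \mathcal{C} \to B$, $f \colon \mathcal{C} \dashrightarrow X$ with $f$ dominant and generically finite and with general fiber $C_b$ a smooth curve of gonality $c$. After replacing $\mathcal{C}$ by a resolution of singularities (and shrinking $B$) I may assume $\mathcal{C}$ is smooth and $f$ is a morphism; the general fiber is unchanged and still has gonality $c$. Write $\nu := f|_{C_b} \colon C_b \to X$ for the restriction to a general fiber, which I may take to be birational onto its image (normalizing a covering family of curves on $X$), so that $\nu$ is a non-constant morphism from a smooth curve onto a curve through a general point of $X$.

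The key input I would establish is an adjunction-type identity $\nu^{*}K_{X} = K_{C_b}(-E)$ with $E \geq 0$ effective. Since $f \colon \mathcal{C} \to X$ is a generically finite dominant morphism of smooth projective varieties of the same dimension, one has $K_{\mathcal{C}} = f^{*}K_{X} \otimes \mathcal{O}(R)$ with $R \geq 0$ the ramification divisor. Restricting to a general fiber, and using that $N_{C_b/\mathcal{C}}$ is trivial (so $K_{\mathcal{C}}|_{C_b} = K_{C_b}$ by adjunction), gives $\nu^{*}K_{X} = K_{C_b}(-E)$ where $E := R|_{C_b} \geq 0$ is an honest effective divisor once $C_b \not\subseteq \operatorname{Supp}R$, which holds for general $b$. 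This is precisely where the positivity hidden in ``the curves cover $X$'' enters: sweeping $X$ out by the $C_b$ forces $\nu^{*}K_{X}$ to sit inside $K_{C_b}$.

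Next I would run the curve computation on $C_b$. Let $A = \mathcal{O}_{C_b}(D)$ be a gonal pencil, so $\deg A = c$ and $h^{0}(A) \geq 2$, and let $D = p_{1} + \cdots + p_{c}$ be a general fiber: a reduced divisor disjoint from $E$ whose image consists of $c$ distinct points of $X \setminus V$ (possible since $D$ and $C_b$ both move and $V$ is only a countable union of proper subvarieties). By the Serre-duality computation familiar from the curve case, $H^{1}(K_{C_b}\otimes A^{-1}) \cong H^{0}(A)^{\vee} \neq 0$ forces the connecting map in the ideal-sheaf sequence of $D$ twisted by $K_{C_b}$ to be nonzero, so $K_{C_b}$ does not separate $D$. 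Because $E$ is disjoint from $D$, restriction to $D$ identifies $\nu^{*}K_{X}|_{D}$ with $K_{C_b}|_{D}$, and the restriction map for $\nu^{*}K_{X}$ factors through that for $K_{C_b}$ via the inclusion $H^{0}(C_b,\nu^{*}K_{X}) \hookrightarrow H^{0}(C_b, K_{C_b})$ of sections vanishing on $E$; hence $\nu^{*}K_{X}$ fails to separate $D$ as well.

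Finally I would transfer the non-separation up to $X$. The evaluation map $H^{0}(X,K_{X}) \to \bigoplus_{i}(K_{X})_{\nu(p_{i})}$ factors as $\nu^{*}$ followed by the restriction $H^{0}(C_b,\nu^{*}K_{X}) \to \bigoplus_{i}(\nu^{*}K_{X})_{p_{i}}$, since $(\nu^{*}K_{X})_{p_{i}} = (K_{X})_{\nu(p_{i})}$ for any morphism. As the latter map is not surjective, neither is the former, so $K_{X}$ fails to separate the $c$ distinct points $\nu(p_{1}),\dots,\nu(p_{c}) \in X\setminus V$; but separating $r$ points on $X\setminus V$ formally implies separating any $c \leq r$ of them, a contradiction. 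Therefore
\[ \covgon(X) = c \geq r+1. \]
I expect the main obstacle to be the simultaneous bookkeeping of general position: arranging that $\mathcal{C}$ is smooth with $f$ a morphism, that $\nu$ is birational onto its image, that $E = R|_{C_b}$ is effective and disjoint from the gonal fiber $D$, and that $\nu(D)$ consists of $c$ distinct points of $X\setminus V$ — the countability of $V$ being exactly what makes all of these generic choices available at once.
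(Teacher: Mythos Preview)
Your argument is correct and follows exactly the route the paper sketches: apply Riemann--Hurwitz to the generically finite map $f\colon \mathcal{C}\to X$ to obtain $\nu^{*}K_X \hookrightarrow K_{C_b}$ on a general fiber, and then invoke (in your case, reprove inline via the Serre-duality computation) Lemma~\ref{lem:gonalityCurves} to conclude. The paper's proof is a one-line reference to \cite{BDELU17,Stap}, and your write-up is precisely the unpacking of that reference, including the bookkeeping about general position that you correctly flag at the end.
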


\begin{proof}
    This is a straightforward application of Riemann-Hurwitz to a covering family of curves $\cC \rightarrow X$ together with Lemma~\ref{lem:gonalityCurves}. See \cite{BDELU17} and \cite{Stap} for more details.
\end{proof}

\subsection{Hypersurfaces of large degree}

We will now survey the literature on measures of irrationality for hypersurfaces of large degree.

\begin{thm}[\cite{BCFS18, BDELU17}]
    Let $X = X_{d} \subset \P^{n+1}$ be a smooth hypersurface of degree $d \geq n+2$. Then $\covgon(X) \geq d-n$. If moreover $X$ is very general, then
    \[ \covgon(X) = d-\left\lfloor\frac{\sqrt{16 n+1}-1}{2}\right\rfloor\]
unless $n\in \{4\alpha^2+3\alpha,4\alpha^2+5\alpha+1|\alpha\in \mathbb{Z}_{>0}\}$, in which case the covering gonality may drop by one.
\end{thm}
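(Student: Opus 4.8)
The plan is to sandwich $\covgon(X)$ between a lower bound coming from the separation-of-points criterion of Proposition~\ref{prop:covgon} and an upper bound coming from an explicit covering family of singular plane sections, and to check that both estimates meet at $d-\nu$, where I write $\nu=\lfloor(\sqrt{16n+1}-1)/2\rfloor$. The useful point is that $\nu$ admits a clean combinatorial description as the largest integer satisfying $\nu(\nu+1)\le 4n$, equivalently $\binom{\nu+1}{2}\le 2n$; this is exactly the inequality that will emerge from a dimension count. By adjunction $K_X\cong\O_X(d-n-2)$, and since $\P^{n+1}$ has no intermediate cohomology the restriction $H^0(\P^{n+1},\O(m))\to H^0(X,\O_X(m))$ is surjective for $m=d-n-2\ge 0$; hence separating points of $X$ by $K_X$ is the same as separating them by degree-$m$ forms on $\P^{n+1}$. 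For any $m+1$ distinct points, taking for each point a product of $m$ hyperplanes passing through the others produces a degree-$m$ form isolating it, so every set of $m+1=d-n-1$ distinct points imposes independent conditions, and Proposition~\ref{prop:covgon} yields the unconditional bound $\covgon(X)\ge d-n$.

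For the upper bound I would construct, through a general point $x\in X$, a plane curve carrying a point of large multiplicity. Concretely, I look for a $2$-plane $\Pi\ni x$ together with a point $y\in\Pi\cap X$ such that the degree-$d$ plane curve $\Pi\cap X$ has multiplicity $\nu$ at $y$; projection from $y$ then realizes $\Pi\cap X$ as a degree-$(d-\nu)$ cover of $\P^1$, so its normalization has gonality at most $d-\nu$, and letting $x$ vary sweeps out $X$. The parameter count is the heart of the construction: choosing $y\in X$ costs $n$ parameters, the planes $\Pi$ containing the line $\overline{xy}$ form a $\P^{n-1}$, and forcing $f|_\Pi$ to vanish to order $\nu$ at $y$ imposes $\binom{\nu+1}{2}-1$ conditions (the Taylor coefficients in degrees $1,\dots,\nu-1$ of a function of two variables). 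Non-emptiness through a general $x$ thus requires $2n-1\ge\binom{\nu+1}{2}-1$, i.e. $\binom{\nu+1}{2}\le 2n$, which recovers precisely $\nu=\lfloor(\sqrt{16n+1}-1)/2\rfloor$ and gives $\covgon(X)\le d-\nu$.

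The remaining task, and the step I expect to be hardest, is to improve the lower bound from $d-n$ to $d-\nu$ when $X$ is very general (the two agree for $n\le 3$ but differ once $n\ge 4$). By Proposition~\ref{prop:covgon} it suffices to show that $\O_X(d-n-2)$ separates $d-\nu-1$ points on $X$ away from a countable union $V$ of proper subvarieties. The elementary product-of-hyperplanes argument only separates $d-n-1$ points, and it genuinely fails for points in special position---already $m+2$ collinear points can never be separated by $\O(m)$. The crux is therefore a special-position analysis, of Cayley--Bacharach type, showing that on a very general $X$ any configuration of at most $d-\nu-1$ points failing to impose independent conditions must be confined to one of countably many proper subvarieties (points spanning a small linear space, or lying on a low-degree curve), which are then absorbed into $V$. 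Very-generality is used precisely to exclude these loci on $X$ itself, and I expect the same threshold $\binom{\nu+1}{2}\le 2n$ to control this count dually, so that the lower and upper bounds match.

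Finally, the exceptional values of $n$ are exactly those where the dimension count is maximally tight. Imposing a point of multiplicity $\nu+1$ rather than $\nu$ would require $\binom{\nu+2}{2}-1$ conditions, and this exceeds the available $2n-1$ parameters by a single unit precisely when $\binom{\nu+2}{2}=2n+1$, i.e. when $4n=\nu(\nu+3)$. Solving this Diophantine equation over the integers forces $\nu\equiv 0$ or $1\pmod 4$, giving $\nu\in\{4\alpha,4\alpha+1\}$ and hence $n\in\{4\alpha^2+3\alpha,\,4\alpha^2+5\alpha+1\}$ with $\alpha\in\Z_{>0}$. In these borderline cases a covering family with a $(\nu+1)$-fold point misses by just one condition, so it may or may not exist; when it does, $\covgon(X)$ drops by one, and deciding which alternative holds requires finer information about the special subvarieties of $X$.
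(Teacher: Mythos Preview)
Your treatment of the unconditional lower bound $\covgon(X)\ge d-n$ is exactly what the paper does: adjunction gives $K_X\cong\O_X(d-n-2)$, degree-$(d-n-2)$ forms separate any $d-n-1$ distinct points, and Proposition~\ref{prop:covgon} applies. The paper proves only this part and cites \cite{BCFS18,BDELU17} for the precise value; your upper-bound construction via planes through a point of multiplicity $\nu$ and the numerology of the exceptional $n$ are also in line with the cited literature.

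The genuine gap is in your plan for the sharp lower bound. You propose to push Proposition~\ref{prop:covgon} from $d-n$ to $d-\nu$ by showing that $\O_X(d-n-2)$ separates $d-\nu-1$ points of $X$ outside a countable union $V$ of proper subvarieties. This cannot work once $n\ge 4$ (where $\nu<n$): a general secant line to $X$ meets it in $d$ distinct points, and any $d-\nu-1$ of these are collinear and hence fail to impose independent conditions on $|\O(d-n-2)|$, since $d-\nu-1>d-n-1$. Such lines pass through every point of $X$, so no choice of $V$ short of all of $X$ eliminates these configurations. In other words, the separation-of-points criterion is intrinsically capped at $d-n$, and your ``absorb the bad loci into $V$'' step fails for the most ubiquitous bad configurations of all.

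The argument in \cite{BCFS18} does not proceed via Proposition~\ref{prop:covgon}. Rather than asking $K_X$ to separate points globally, one studies an arbitrary covering family directly: the Cayley--Bacharach condition on the fibers of a $g^1_k$ with $k$ small forces those fibers to lie on lines (this is the step you allude to, but applied to the gonal fibers on each curve, not to arbitrary subsets of $X$), which constrains the covering curves themselves to arise from linear sections with imposed singularities. One then invokes results of Ein and Voisin bounding the families of such curves on a very general hypersurface to rule out gonality below $d-\nu$. So the Cayley--Bacharach input is used, but at the level of the covering family rather than as a global separation statement; that reframing is what you are missing.
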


\noindent The lower bound $\covgon(X) \geq d-n$ above follows immediately from Proposition~\ref{prop:covgon} and the adjunction formula for $K_{X}$.

The degree of irrationality can also be precisely described:

\begin{thm}[\cite{Cortini00, BCD14, BDELU17}]\label{thm:BDELU}
    Let $X = X_{d} \subset \P^{n+1}$ be a smooth hypersurface of degree $d \geq 2n+1$.
    \begin{enumerate}
    \item Suppose $n = 2$. Then $\irr(X) = d-1$ unless one of the following situations occurs:
    \begin{enumerate}[(a)]
    \item $X$ contains a line $\ell$ and a rational curve $R$ of degree $k$ which meets $\ell$ in $k-1$ points;
    \item $X$ contains a twisted cubic.
    \end{enumerate}
    In each of these cases, $\irr(X) = d-2$.
    \item Suppose $n = 3$. Then $\irr(X) = d-1$ unless one of the two following situations occurs:
    \begin{enumerate}[(a)]
        \item $X$ contains a non-degenerate rational scroll $S$ of degree $s$ and a line $\ell$ which is $(s-1)$-secant to $S$;
        \item $X$ contains a non-degenerate rational surface $S$ of degree $s$ and a line $\ell \subset S$ such that the residual intersection of $S$ with a general hyperplane $H$ containing $\ell$ is an irreducible rational curve $R$ which is $(s-2)$-secant line to $\ell$.
    \end{enumerate}
    In each of these two cases, $\irr(X) = d-2$.
    \item For any $n \geq 2$, if $X$ is very general then $\irr(X) = d-1$. Furthermore, if $\irr(X) \geq 2n+2$, then any map $X \dashrightarrow \P^{n}$ of degree $d-1$ is birationally equivalent to projection from a point on $X$.
    \end{enumerate}
\end{thm}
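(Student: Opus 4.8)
The plan is to prove the bound in two halves: the universal upper estimate $\irr(X) \le d-1$, and a matching lower bound that holds for every smooth $X$ outside the explicitly listed configurations. The upper bound is exactly Remark~\ref{projection}: projecting $X \subset \P^{n+1}$ from a general point lying on $X$ absorbs one of the $d$ intersection points of a general line with $X$, producing a dominant map to $\P^n$ of degree $d-1$. Everything therefore reduces to proving $\irr(X) \ge d-1$ together with a classification of the borderline situations in which a map of degree $d-2$ exists; parts (1), (2), (3) then correspond to carrying out this classification for $n=2$, for $n=3$, and for very general $X$ in all dimensions.

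For the lower bound I would run the Cayley--Bacharach strategy that underlies the separation results behind Proposition~\ref{prop:covgon}. Let $\varphi\colon X \dashrightarrow \P^n$ be dominant of degree $\delta$ and let $Z = \varphi^{-1}(p) = \{z_1,\dots,z_\delta\}$ be a general (reduced) fibre. The key claim is that $Z$ satisfies the Cayley--Bacharach condition with respect to $K_X$: any $s \in H^0(X,K_X)$ vanishing on $Z \setminus \{z_i\}$ must vanish at $z_i$ as well. I would prove this via Grothendieck duality, which furnishes a trace map $\varphi_* \omega_X \to \omega_{\P^n}$; on global sections this lands in $H^0(\P^n, K_{\P^n}) = 0$, so the trace of every holomorphic $n$-form vanishes identically. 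Evaluating the vanishing trace at the general point $p$ gives, in suitable local trivialisations, the reciprocity relation $\sum_i s(z_i)/J_i = 0$, where $J_i \neq 0$ is the Jacobian of $\varphi$ at $z_i$; if all but one summand vanishes, so does the last, which is precisely the stated Cayley--Bacharach property.

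Next I would convert this into projective geometry. Since $K_X = \O_X(d-n-2)$ by adjunction and $d \ge 2n+1$, the restriction $H^0(\P^{n+1}, \O(d-n-2)) \to H^0(X, K_X)$ is an isomorphism, so the condition on $Z$ becomes a statement about $\delta$ points of $\P^{n+1}$ and forms of degree $d-n-2$. The heart of the argument is then a geometric classification: a Cayley--Bacharach scheme of length at most $d-1$ for $\O_{\P^{n+1}}(d-n-2)$ is forced to degenerate, the generic possibility being $d-1$ collinear points, whose spanning line meets $X$ in one further point, namely the centre of the projection. A length of only $d-2$ requires strictly more special geometry --- a line contained in $X$ together with a rational curve or scroll meeting it with the prescribed secancy --- and enumerating these borderline degenerations dimension by dimension yields exactly the exceptional cases (a), (b) for $n=2$ and (a), (b) for $n=3$, in each of which one checks directly that a degree $d-2$ map does exist. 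The hypothesis $d \ge 2n+1$ enters here to guarantee that $K_X$ is positive enough for the relevant conditions to be independent, so that the length count is sharp.

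Finally, for part (3) I would combine this with genericity. Containing a line together with one of the forbidden secant curves or scrolls is a proper closed condition on the universal hypersurface, so a very general $X$ avoids all of them and hence has $\irr(X) = d-1$. For the rigidity statement, the hypothesis $\irr(X) \ge 2n+2$ removes the degenerate fibre structures outright, forcing every degree $d-1$ map to have general fibre equal to $d-1$ collinear points; the spanning lines then form an $n$-dimensional family covering $X$, and an incidence argument shows they pass through a common point, so $\varphi$ is projection from it. The main obstacle is the classification step of the third paragraph: proving that short Cayley--Bacharach schemes for $\O_{\P^{n+1}}(d-n-2)$ must degenerate (collinearly or nearly so), and then pinning down precisely which length $d-2$ configurations survive, is delicate because one must track how the $\delta$ points can distribute among lines, scrolls and rational curves contained in $X$ as $p$ varies --- and it is exactly this analysis that forces the separate $n=2$ and $n=3$ exceptional lists.
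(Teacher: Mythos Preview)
Your outline is faithful to the paper through the Cayley--Bacharach step: the trace map, the identification $K_X=\O_X(d-n-2)$, and the standard lemma that a Cayley--Bacharach configuration of at most $2(d-n-2)+1$ points must be collinear are exactly how \cite{BCD14,BDELU17} force the general fibre of a low-degree map onto a line $\ell_y$. For parts (1) and (2) this is indeed followed by a delicate case analysis of how the residual scheme $\ell_y\cap X-\varphi^{-1}(y)$ can behave, and your summary of that is acceptable as a sketch.

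The gap is in part (3). You argue that the exceptional configurations from parts (1)--(2) form closed conditions, so a very general $X$ avoids them. But for $n\ge 4$ there is no such finite list: \cite{BCD14} only carries out the classification in dimensions $2$ and $3$, and there is no reason the countably many ``bad'' loci you would need are even known to be proper. The paper's argument is genuinely different here. Once the fibre lies on a line $\ell_y$, write $\ell_y\cdot X=\varphi^{-1}(y)+F_y$ with $\deg F_y=d-\delta$. If $\varphi$ is not projection from a point, the residual cycles $F_y$ sweep out a positive-dimensional subvariety $S\subset X$ of covering gonality $e\le d-\delta$. The fact that the $\ell_y$ form a \emph{first-order congruence} (a unique line through a general point of $\P^{n+1}$) then yields a numerical inequality $e(n-s)\le n$ via an intersection computation on the universal family of lines. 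The contradiction with very-generality comes from the Ein--Voisin bounds \cite{Ein88,Voisin96}, which rule out subvarieties of such small covering gonality on a very general hypersurface of degree $\ge 2n+1$. Your ``incidence argument shows they pass through a common point'' does not hold as stated: an $n$-parameter first-order congruence of lines in $\P^{n+1}$ need not have a common point (think of a ruling of a quadric), so the residual-cycle/Ein--Voisin mechanism is not a technicality but the actual content of the proof.
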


An important idea that appears in the above papers when studying the degree of irrationality for hypersurfaces is the \textit{Cayley--Bacharach condition}. The basic idea is as follows. Given a dominant map between $n$-dimensional varieties $f \colon X \dashrightarrow Y$, there is a trace map 
\[ \Tr \colon H^{0} (X, K_{X}) \longrightarrow H^{0}(Y, K_{Y}), \]
which was introduced by Mumford. Let $U\subset Y$ be the open locus where $f$ is well-defined and \'{e}tale. Given $y \in U$, the trace map takes the form
\[\Tr(\eta)(y) = \sum_{x \in f^{-1}(y)} \det(df^{-1})(\eta(x))\in \mybigwedge^n T_{U,y}^*, \]
where $df: T_{U,y}^*\longrightarrow  T_{f^{-1}(U),x}^*$ is the differential of $f$. This holomorphic form extends from $U$ to $X$ by Hartog's theorem.

If $H^{n,0}(Y) = 0$, which is to say that $Y$ has no holomorphic $n$-forms, then for a general $y \in Y$ we see that if $\eta$ vanishes on all but one of the points of $f^{-1}(y)$, then $\eta$ vanishes on the remaining point. This implies that $K_X$ cannot separate $\geq \deg f$ points on an open subset of $X$. The fibers of the map are said to satisfy the \textit{Cayley--Bacharach condition with respect to} $|K_{X}|$.

Now we will briefly sketch a proof of part (3). For hypersurfaces $X \subset \P^{n+1}$ of degree $d \geq 2n+1$, the key place where this Cayley-Bacharach property appears is to show that for maps $X \dashrightarrow \P^{n}$ of low degree, say $\delta < d$, the general fiber $X_{y} \coloneqq f^{-1}(y)$ over $y \in \P^{n}$ (viewed as a subvariety $X_{y} \subset \P^{n+1}$) must lie on a line $\ell_{y}$. Note that this is exactly what one would see if the map $f$ comes from projection from a point!

Recall from the covering gonality bound that $d-\delta \leq n$. Assuming $f$ is not given by projection from a point, in \cite{BDELU17} the authors observe that if one writes
\[ \ell_{y} \cdot X = X_{y} + F_{y}, \]
where $F_{y}$ is a zero-cycle of degree $d-\delta$, then the $F_{y}$ (or some subcycles) will sweep out a subvariety $S \subset X$ of dimension $s \geq 1$ having covering gonality $e \leq d - \delta$. Using the fact that a general point in $\P^{n+1}$ lies on exactly one of these lines $\ell_{y}$ (the family of lines forms what is called a \textit{first order congruence of lines}) and a numerical calculation involving the universal family of lines, they prove the inequality $e(n-s) \leq n$. But by some ideas of Ein \cite{Ein88} and Voisin \cite{Voisin96}, in a large enough degree range a very general hypersurface of degree $d$ does not contain an irreducible subvariety of dimension $s \geq 1$ with $\covgon(S) = e$. In particular, this contradicts the fact that $S$ has dimension $\geq 1$ to begin with.

Building on Theorem~\ref{thm:BDELU}, there are some natural questions that one can ask for hypersurfaces in higher dimensions:

\begin{question}
    For an arbitrary smooth hypersurface $X \subset \P^{n+1}$ of dimension $n \geq 4$ and sufficiently large degree, can one classify maps $X \dashrightarrow \P^{n}$ of degree $\leq d-1$? Likewise, is it true that any covering family which computes the covering gonality must come from a suitable family of singular plane curves?
\end{question}

Finally, beyond the case of surfaces and threefolds one can ask:

\begin{question}
    For a smooth hypersurface $X \subset \P^{n+1}$ of large degree, is it true that $\irr(X) \geq d-2$?
\end{question}

\subsection{Complete intersections of large degree}

Lazarsfeld \cite{Laz97} noticed early on that the gonality of a smooth complete intersection curve $C \subset \P^{r+1}$ of type $(d_{1}, \ldots, d_{r})$ is bounded from below by
\[ \gon(C) \geq (d_{1}-1) d_{2} \cdots d_{r} \]
(assuming the degrees are ordered as $d_{1} \leq d_{2} \leq \cdots \leq d_{r}$). In other words, the lower bound on gonality is \textit{multiplicative} in the degrees of the defining equations. The arguments were somewhat limited to the case of curves in that they relied on Bogomolov unstability of vector bundles on a surface containing $C$ and the fact that gonality is computed by a regular map rather than a rational map. This bound was later revisited in work of Hotchkiss--Lau--Ullery \cite{HLU20}, where they showed that the maps realizing the gonality of $C$ are computed as projections from suitable linear subspaces. For complete intersection varieties of higher dimension, the naive approach using Proposition~\ref{prop:covgon} together with the adjunction formula unfortunately only gives a lower bound that is \textit{additive} in the degrees $d_{i}$.

In a recent paper \cite{CCZ24}, the authors proved a significant generalization of Proposition~\ref{prop:covgon} which involves working with pairs and using Nadel vanishing (inspired by the work of Angehrn--Siu on Fujita's conjecture). Here, we state a simplified version which will be enough for our purposes:

\begin{thm}[{\cite[Theorem C]{CCZ24}}]
    \label{thm:CCZ1}Let $(X, H)$ be a smooth polarized variety. Suppose there exists an open subset $U \subseteq X$ and there exists a number $\alpha > 0$ such that any curve $C \subseteq X$ meeting $U$ satisfies $\deg_H{C} \ge \alpha$. Then there exists a constant $\delta \coloneqq \delta(X, H)$ such that the linear series $|K_X + d H|$ separates at least $(d - \delta \, \sqrt{d}) \cdot \alpha$ distinct points on $U$. Moreover, if $H$ is very ample, then one can take $\delta = 2\dim{X}$.
\end{thm}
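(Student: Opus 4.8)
The plan is to deduce the separation statement from Nadel vanishing by constructing, for any configuration $Z = \{x_1, \dots, x_N\} \subset U$ of $N \le (d - \delta\sqrt{d})\alpha$ distinct points, an effective $\mathbb{Q}$-divisor $\Delta$ with $\Delta \equiv_{\mathrm{num}} \lambda\, dH$ for some rational $\lambda < 1$ whose multiplier ideal $\mathcal{J}(X, \Delta)$ cuts out exactly the reduced points of $Z$ — that is, each $x_i$ is an isolated point of the cosupport and $\mathcal{J}(X,\Delta)$ agrees with the ideal sheaf $\mathcal{I}_Z$ near $Z$. Since $H$ is ample and $\lambda<1$, the class $dH - \Delta$ is then ample, so Nadel vanishing gives $H^1\big(X, (K_X + dH) \otimes \mathcal{J}(X,\Delta)\big) = 0$, and the ideal-sheaf sequence for $Z$ shows that the restriction $H^0(X, K_X + dH) \to H^0(Z, (K_X+dH)|_Z)$ is surjective. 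This is precisely the assertion that $|K_X + dH|$ separates the $N$ points of $Z$, and since $Z$ is arbitrary it gives the claimed separation on $U$.

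The construction of $\Delta$ follows the Angehrn--Siu strategy of \emph{creating}, then \emph{isolating}, non-klt singularities. First I would produce an auxiliary divisor $D \in |mH|$ singular at every $x_i$; such a $D$ exists once the asymptotic Riemann--Roch estimate $h^0(X, mH) \sim \tfrac{H^n}{n!}\, m^n$ (with $n = \dim X$) exceeds the number of conditions imposed by the prescribed multiplicities, and scaling $D$ by a suitable rational coefficient makes $(X, cD)$ fail to be klt at each point. The resulting multiplier ideal need not be cosupported on isolated points: its minimal log canonical centers through the $x_i$ may be positive-dimensional. I would then run the Angehrn--Siu dimension reduction: at a center $W$ of dimension $\ge 1$ passing through a given point, use the (very) ampleness of $H|_W$ to build a further divisor that cuts $W$ down, tipping the pair into a non-klt configuration whose center has strictly smaller dimension, at the cost of adding a controlled multiple of $H$ to $\Delta$. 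Iterating terminates with isolated points, and careful budgeting keeps the total coefficient strictly below $d$.

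The quantitative heart of the argument is the degree bookkeeping across this induction, and this is where both the factor $\alpha$ and the $\sqrt{d}$ loss enter. Each reduction step spends an amount of the degree budget governed by the $H$-volume of the intermediate center; the delicate final step involves one-dimensional centers, and here the hypothesis that every curve $C$ meeting $U$ satisfies $\deg_H C \ge \alpha$ is exactly what forces these curves to be large, so that a degree budget of size $\sim d$ can be converted into $\sim \alpha d$ isolable points. Optimizing the multiplicities at the initial creation step against the degree needed to keep $dH - \Delta$ positive produces the characteristic balance $N \approx (d - \delta\sqrt{d})\alpha$, the $\sqrt{d}$ reflecting the trade-off between the volume spent on singularities and the volume left over for positivity. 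When $H$ is very ample, the dimension-reduction divisors can be drawn directly from $|H|_W|$ without auxiliary modifications, so each of the at most $n$ steps costs at most $2$ in degree, yielding the explicit value $\delta = 2\dim X$.

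The main obstacle I anticipate is not the vanishing theorem but the isolation of the log canonical centers together with the uniform control of degrees. One must ensure that the Angehrn--Siu procedure can be carried out simultaneously at all $N$ points, that the cosupport of $\mathcal{J}(X,\Delta)$ collapses to \emph{precisely} the reduced points $x_i$ and nothing more, and that the accumulated coefficient remains strictly below $d$. Taming the positive-dimensional centers — and invoking the curve-degree hypothesis $\deg_H C \ge \alpha$ at exactly the one-dimensional stage to boost the separation count by the factor $\alpha$ — is the crux of the proof.
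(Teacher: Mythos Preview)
The paper is a survey and does not supply its own proof of this theorem; it merely cites \cite{CCZ24} and remarks that the argument ``involves working with pairs and using Nadel vanishing (inspired by the work of Angehrn--Siu on Fujita's conjecture).'' Your proposal is exactly this strategy---construct a $\mathbb{Q}$-divisor $\Delta$ whose multiplier ideal isolates the given points, apply Nadel vanishing to $K_X+dH-\Delta$, and run the Angehrn--Siu dimension-cutting induction with the curve-degree hypothesis $\deg_H C\ge\alpha$ entering at the one-dimensional stage---so it aligns with the method the paper attributes to \cite{CCZ24}. One minor over-statement: you do not need the cosupport of $\mathcal{J}(X,\Delta)$ to be \emph{precisely} $Z$, only that each $x_i$ is an isolated point of the cosupport with $\mathcal{J}(X,\Delta)_{x_i}=\mathfrak{m}_{x_i}$; extra non-klt loci away from $Z$ are harmless for the surjectivity argument.
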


Furthermore, the authors gave a lower bound for the degree of any curve on a general complete intersection variety of large enough degrees \cite[Theorem A]{CCZ24}. More precisely, if the complete intersection $X$ is cut out by polynomials of large enough degrees, then any curve on $X$ has degree bounded from below by the degree of $X$. Together with Theorem \ref{thm:CCZ1}, this was used to show:

\begin{thm}[{\cite[Theorem B]{CCZ24}}]
Let $X \subset \P^{n+r}$ be a general complete intersection variety of dimension $n$ cut out by polynomials of degrees $d_{1}, \ldots, d_{r} \ge n$. Then
\[ \covgon(X) \ge (d_1 - 2 (n+1)  \sqrt{d_1})(d_2 - n + 1) \cdots (d_r - n + 1) + 1. \]
\end{thm}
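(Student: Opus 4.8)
The plan is to combine the two main inputs from \cite{CCZ24} that are stated just above: the separation-of-points estimate of Theorem~\ref{thm:CCZ1} together with the degree lower bound for curves on a general complete intersection (\cite[Theorem A]{CCZ24}), and then feed the resulting separation statement into Proposition~\ref{prop:covgon}. First I would set the polarization to be $H = \O_X(1)$, the restriction of the hyperplane class, which is very ample on $X$, so that the final clause of Theorem~\ref{thm:CCZ1} applies and gives the explicit constant $\delta = 2\dim X = 2n$. The key structural point is that on a general complete intersection of the stated degrees, every curve meeting a suitable open set $U$ has $H$-degree bounded below; by \cite[Theorem A]{CCZ24} one can take this bound to be $\alpha = \deg_H X = d_1 \cdots d_r$ (after removing a countable union of proper subvarieties where low-degree curves might concentrate, which is exactly the kind of locus $V$ that Proposition~\ref{prop:covgon} tolerates).

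Next I would invoke adjunction to identify the canonical bundle. For a complete intersection $X \subset \P^{n+r}$ of type $(d_1,\dots,d_r)$ one has $K_X = \O_X\!\left(\sum_i d_i - (n+r+1)\right)$, so writing $K_X + dH$ in Theorem~\ref{thm:CCZ1} is the same as a twist $\O_X(m)$ for an appropriate $m$. The cleanest route is to apply Theorem~\ref{thm:CCZ1} directly: it asserts that $|K_X + dH|$ separates at least $(d - \delta\sqrt{d})\cdot \alpha$ points on $U$ for every $d \ge 0$, with $\delta = 2n$. Since $|K_X + dH| \subseteq |K_X|$-type series are all effective twists of $\O_X$, the separation of points by some complete linear series $|m H|$ is in particular separation by $|K_X|$ after absorbing the positive twist, which is what Proposition~\ref{prop:covgon} requires. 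I would then optimize the choice of $d$ so that the number of separated points is as large as possible while keeping the bookkeeping of the degrees clean; the target form $(d_1 - 2(n+1)\sqrt{d_1})(d_2-n+1)\cdots(d_r-n+1)$ suggests choosing the twist so that the factor $\alpha = d_1\cdots d_r$ gets redistributed, with one factor $d_1$ absorbing the $\sqrt{d}$ correction and each remaining $d_i$ shifted down by $n-1$ to account for adjunction.

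The main obstacle is matching the constants so that the final bound comes out in the exact stated form. The separation estimate produces a correction term scaling like $\sqrt{d}$ against the whole complete-intersection degree $d_1\cdots d_r$, but the theorem distributes the loss as a factor $d_1 - 2(n+1)\sqrt{d_1}$ on the smallest degree together with shifts $d_i \mapsto d_i - n + 1$ on the others. Reconciling these requires choosing $d$ (the twisting parameter) carefully — essentially taking $d$ comparable to $d_1$ — and then verifying that $\delta = 2n$ combined with the Theorem~A degree bound yields precisely the coefficient $2(n+1)$ in front of $\sqrt{d_1}$ rather than some nearby constant. This is where the adjunction shift of each $d_i$ by $n-1$ and the extra $+1$ in the final $\covgon$ estimate (coming from the ``$r+1$'' in Proposition~\ref{prop:covgon}, i.e. separating $r$ points forces gonality $\ge r+1$) must all be tracked simultaneously.

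Finally I would assemble the pieces: the separation count $N \coloneqq (d_1 - 2(n+1)\sqrt{d_1})(d_2-n+1)\cdots(d_r-n+1)$ points are separated by an effective twist of $K_X$ on $X \setminus V$, so Proposition~\ref{prop:covgon} gives $\covgon(X) \ge N + 1$, which is exactly the claim. The only genuinely nontrivial inputs beyond careful constant-chasing are the two cited theorems from \cite{CCZ24}; given those, the argument is a direct application, so I expect the write-up to be short once the optimization of the twisting parameter is pinned down.
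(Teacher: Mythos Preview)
Your overall strategy---combine the curve--degree bound with the separation estimate and feed into Proposition~\ref{prop:covgon}---is exactly right, but the implementation has a real gap. You apply Theorem~\ref{thm:CCZ1} to $(X,H)$ itself and then assert that ``separation of points by some complete linear series $|mH|$ is in particular separation by $|K_X|$ after absorbing the positive twist.'' This inequality points the wrong way: a \emph{more} positive line bundle separates \emph{more} points, so knowing that $|K_X + dH|$ separates $N$ points for some $d>0$ tells you nothing about $|K_X|$. Since Proposition~\ref{prop:covgon} needs separation by $K_X$ specifically, applying Theorem~\ref{thm:CCZ1} on $X$ with any positive twist is useless, and with $d=0$ the bound is vacuous. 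This is also why your constant-matching never closes: you get $\delta = 2n$ instead of the $2(n+1)$ visible in the statement.

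The missing idea is to apply Theorem~\ref{thm:CCZ1} not to $X$ but to the partial complete intersection $X' = V(f_2,\dots,f_r) \subset \P^{n+r}$ of dimension $n+1$, taking the twist $d = d_1$. Adjunction gives $(K_{X'} + d_1 H)\big|_X = K_X$, and for complete intersections the restriction map on global sections is surjective, so separation by $|K_{X'}+d_1H|$ on an open $U' \subset X'$ yields separation by $|K_X|$ on $U'\cap X$. Now everything falls into place: $\dim X' = n+1$ gives $\delta = 2(n+1)$; the twist is forced to be $d_1$, producing the factor $d_1 - 2(n+1)\sqrt{d_1}$; and the curve--degree input $\alpha$ is the bound from \cite[Theorem~A]{CCZ24} applied to $X'$, which involves only $d_2,\dots,d_r$ and accounts for the remaining factors $(d_i - n + 1)$. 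Then Proposition~\ref{prop:covgon} supplies the final $+1$.
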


\noindent 
Asymptotically, this implies that for any $\epsilon > 0$ there is an integer $N(\epsilon ; n , r)$ such that when $d_i \ge N(\epsilon ; n, r)$ we have 
\[ \covgon(X) \ge (1 - \epsilon) d_1 \cdots d_r. \]
Somewhat unexpectedly, this covering gonality result applies to the \textit{general} complete intersection (as opposed to the \textit{very general} one), so in particular this holds for most complete intersections over $\Q$.

\subsection{Fano hypersurfaces}

In view of the techniques above, it is natural to ask about whether one can bound measures of irrationality for varieties with negative canonical bundle. It is well-known that Fano varieties are rationally connected \cite{KMM92}, so in particular their covering gonality is always equal to 1. For hypersurfaces, the Fano range coincides with $d \leq n+1$. In \cite{CS20}, the first author and Stapleton gave the first examples of Fano varieties with arbitrarily large degrees of irrationality:

\begin{thm}
Let $X_{n,d} \subset \P^{n+1}_{\C}$ be a very general hypersurface of dimension $n$ and degree $d$. If $d \geq n+1 - \sqrt{n+2}/4$, then
\[ \irr(X_{n,d}) \geq \sqrt{n+2}/4. \]
\end{thm}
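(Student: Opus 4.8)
The plan is to reduce the statement to a lower bound for a hypersurface in positive characteristic, where one has access to Kollár's differential-form machinery, and then to optimize over an auxiliary prime. The first ingredient is the semicontinuity of the degree of irrationality under specialization: a dominant rational map of degree $\delta$ on a general fiber of a family spreads out over the base and specializes to a dominant rational map of degree at most $\delta$ on special fibers, so $\irr$ can only drop under specialization. Spreading the universal degree-$d$ hypersurface out over $\Z$ and reducing modulo a well-chosen prime $p$, the very general complex member $X_{n,d}$ specializes to a general hypersurface $X_0 \subset \P^{n+1}_{\bar{\mathbb{F}}_p}$ of degree $d$. It therefore suffices to prove $\irr(X_0) \geq \sqrt{n+2}/4$ for a suitable $p$, since then $\irr(X_{n,d}) \geq \irr(X_0)$.

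The engine in characteristic $p$ is Kollár's construction \cite{Kollar95}: for a general hypersurface $X_0$ of degree $d$ over $\bar{\mathbb{F}}_p$ one produces a big subsheaf $L \hookrightarrow \Omega^{n-1}_{X_0}$, i.e. a nonzero twisted $(n-1)$-form, whose positivity is governed by a line bundle $\O_{X_0}(a_p)$ with $a_p$ comparable to $d-(n+1)$ up to an error term depending on $p$. Heuristically, passing to the $p$-th power (Frobenius) lets the $\sim n$ defining relations absorb positivity, and the loss incurred is of order $\tfrac{n}{p}+p$; this is what forces the eventual $\sqrt{n}$-scaling once $p$ is optimized, and it is also where the delicate divisibility hypotheses relating $p$ and $d$ enter (Totaro's refinement of Kollár's argument is what allows one to handle arbitrary $d$). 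I will treat the existence and positivity of $L$ as a black box.

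To convert the twisted form into a bound on $\irr$, suppose $f \colon X_0 \dashrightarrow \P^n$ is dominant and separable of degree $\delta$, set $A = f^{*}\O_{\P^n}(1)$, and for a general line $\ell \subset \P^n$ let $C = \overline{f^{-1}(\ell)}$. Then $C$ is the general member of a complete intersection of $n-1$ divisors in $|A|$, it sweeps out $X_0$, and the induced map $C \to \ell \cong \P^1$ has degree $\delta$, so $\gon(C) \le \delta$. Note that $C$ is not a rational curve witnessing the rational connectedness of $X_0$, so there is no reason for its gonality to be small, and indeed the goal is to bound $\gon(C)$ from below. Restricting $L \hookrightarrow \Omega^{n-1}_{X_0}$ to $C$ and composing with the top quotient of the conormal filtration $\Omega^{n-1}_{X_0}|_C \twoheadrightarrow \bigwedge^{n-2} N^{*}_{C/X_0} \otimes K_C$ should produce a nonzero map $L|_C \to \bigwedge^{n-2} N^{*}_{C/X_0}\otimes K_C$; since $N_{C/X_0} = (A|_C)^{\oplus(n-1)}$, this yields a lower bound on $\deg K_C$ in which the positivity $a_p$ of $L$ compensates for the negativity of $K_{X_0}$ in the Fano range. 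Feeding this into Lemma~\ref{lem:gonalityCurves} (that $K_C$ separates many points off a countable locus) gives $\gon(C) \ge B$ for an explicit $B$, whence $\delta \ge \gon(C) \ge B$ and so $\irr(X_0) \ge B$. Choosing $p$ of size roughly $\sqrt{n+2}$ balances the two error terms and yields $B \ge \sqrt{n+2}/4$ precisely when $d \ge n+1 - \sqrt{n+2}/4$.

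The main obstacle is the characteristic-$p$ construction together with the bookkeeping of the third step. Establishing that $L|_C$ maps nontrivially into the conormal quotient, rather than dying in the subsheaf $\det N^{*}_{C/X_0}$, and keeping track of the intersection numbers relating $H = \O_{X_0}(1)$, the pulled-back polarization $A$, and the conormal bundle is where the real work lies; one must also guarantee that the separation-of-points hypothesis of Lemma~\ref{lem:gonalityCurves} holds for the general member $C$ of the covering family away from the relevant countable bad locus. Finally, extracting the clean constant $1/4$ requires carefully optimizing the error $\tfrac{n}{p}+p$ over admissible primes and checking that Totaro's form of Kollár's construction applies for the given $d$; this arithmetic optimization, rather than any single geometric step, is what pins down the exact inequality.
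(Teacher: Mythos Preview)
Your overall architecture---specialize to characteristic $p$ and exploit Koll\'ar's $(n-1)$-forms---matches the paper, but the specialization step as you state it is a genuine gap. You assert that a degree-$\delta$ dominant rational map on the general fiber ``spreads out \ldots and specializes to a dominant rational map of degree at most $\delta$'' on the special fiber, so that $\irr$ can only drop. This is not known: it is precisely Question~\ref{prob:inequalitydegirr} in the paper. The problem is that the limit of the graph of $X_\eta \dashrightarrow \P^n$ need not project dominantly (or generically finitely) onto $\P^n_0$; the map can collapse. What Chen--Stapleton actually prove and use is that the minimal degree of a dominant rational map to a \emph{ruled} variety specializes (\cite[Proposition~C]{CS20}): if $X_\eta$ maps with degree $\delta$ to $\P^n$, then $X_0$ maps with degree $\le \delta$ to some ruled variety, because ruledness itself specializes. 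The theorem is then deduced from the stronger bound on maps to ruled varieties, not from a bound on $\irr(X_0)$ alone.

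This also forces a different mechanism in characteristic $p$ than the one you propose. Your plan is to pull back a general line, get a curve $C$, restrict $L$ to $C$, and invoke Lemma~\ref{lem:gonalityCurves}. But once the target is an arbitrary ruled variety $Z \times \P^1$ rather than $\P^n$, there is no covering family of rational curves to pull back, and your curve argument does not get off the ground. The paper instead uses a trace map for $(n-1)$-forms along a separable map $f\colon X_0 \dashrightarrow Z\times\P^1$: the big line bundle $L\hookrightarrow \Omega^{n-1}_{X_0}$ separates roughly $2B$ points, while $(n-1)$-forms on a ruled $n$-fold satisfy a Cayley--Bacharach-type constraint that forces $\deg f$ to be large. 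Finally, the Koll\'ar forms live on a $p$-cyclic cover to which the hypersurface degenerates, not on a general hypersurface over $\bar{\mathbb{F}}_p$ as you wrote; this is where the divisibility and prime-optimization bookkeeping you allude to actually takes place.
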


\noindent Furthermore, the bound above actually holds for the minimal degree of a map to a \textit{ruled variety}. Work of Iskovskih and Manin shows that the degree of irrationality of any smooth quartic threefold is equal to 3 (see Example~\ref{ex:IskMan}), so the theorem above gives the first examples of rationally connected varieties $X$ with $\irr(X) \geq 4$.

The main idea behind the theorem above is to degenerate to positive characteristic in the spirit of Koll\'{a}r \cite{Kollar95}, who showed that certain $n$-dimensional cyclic covers $X$ of hypersurfaces in characteristic $p$ carry a large amount of differential $(n-1)$-forms. There are then two things to show: (1) maps to ruled varieties specialize, (2) the cyclic covers $X$ cannot admit low degree maps to ruled varieties. The criterion that allows the authors to prove part (2) is similar in spirit to the paragraphs following Theorem~\ref{thm:BDELU} and relies on the existence of a trace map for differential forms under separable maps.

Another class of varieties that are of significant interest are Calabi-Yau hypersurfaces. For example, Voisin \cite{Voisin04} has shown that a very general Calabi-Yau hypersurface $X_{d} \subset \P^{n+1}$ (where $d = n+2$) is not birationally covered by $r$-dimensional abelian varieties for any $r \geq 2$. The following question remains open:

\begin{question}
    Let $X \subset \P^{n+1}$ be a general Calabi-Yau hypersurface of dimension $n \geq 3$. Is $X$ covered by elliptic curves?
\end{question}

In this direction, Voisin has made a number of interesting observations. For example, a potential covering family of elliptic curves cannot be constant in moduli. As a byproduct, she shows that such a hypersurface which is covered by elliptic curves must contain a uniruled divisor. On the other hand, for the very general quintic threefold ($n = 3$) this violates Clemens' conjecture on the finiteness of rational curves of fixed degree.

\vspace{0.5cm}

\section{Abelian varieties}\label{abvar}

Since abelian varieties have trivial cotangent and canonical bundles, one cannot use positivity techniques to obtain lower bounds on their measures of irrationality. Nonetheless, several results point to the fact that abelian varieties of large dimension, and more generally varieties with large Albanese dimension are far from rational.

The first such result applies more generally to varieties with large holomorphic length. It is one of the few results which provide interesting lower bounds on the degree of irrationality of a broad class of algebraic varieties. Introduced in \cite{AP2}, the holomorphic length $\text{hol.length}(X)$ of a smooth projective variety $X$ is the largest non-negative integer $r$ such that there exists holomorphic forms $\omega_1, \ldots, \omega_r \in \bigoplus_{i=1}^{\dim X} H^0(X,\Omega^i)$ with $\omega_1\wedge \dots\wedge \omega_r \neq 0$. 

Note that the holomorphic length of a variety is at most its dimension and greater or equal to its Albanese dimension. Alzati--Pirola showed that holomorphic length can be used to give lower bounds for the degree of irrationality:

\begin{thm}[\cite{AP2}]\label{APthm}
Let $X$ be a smooth projective variety.  The degree of irrationality of $X$ is greater than its holomorphic length.
\end{thm}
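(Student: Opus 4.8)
The plan is to extend the trace map for holomorphic forms (discussed after Theorem~\ref{thm:BDELU}) from top forms to forms of every degree, and then to feed the resulting vanishing conditions into a purely multilinear statement about the exterior algebra of the cotangent space at a general point. Since both $\irr$ and the holomorphic length are birational invariants, I would fix forms $\omega_1,\ldots,\omega_r\in\bigoplus_{i=1}^{\dim X}H^0(X,\Omega^i)$ with $\omega_1\wedge\cdots\wedge\omega_r\neq 0$ and $r=\mathrm{hol.length}(X)$, and take an arbitrary dominant generically finite $\varphi\colon X\dashrightarrow\P^n$ (with $n=\dim X$) of degree $d$; the goal is $d\geq r+1$. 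After replacing $X$ by a smooth birational model on which $\varphi$ is a morphism, over a general $y\in\P^n$ the fiber consists of $d$ reduced points $x_1,\ldots,x_d$, near each of which $\varphi$ admits a local holomorphic section $\sigma_j$. For a holomorphic $p$-form $\omega$ the trace $\Tr(\omega)=\sum_j\sigma_j^*\omega$ extends across the branch locus by Hartogs and hence lands in $H^0(\P^n,\Omega^p)=0$. The essential feature I will use is that each $\sigma_j^*$ is a homomorphism of exterior algebras, so the trace is compatible with wedge products.

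Setting $b_{k,j}:=(\sigma_j^*\omega_k)(y)\in\bigwedge^{\deg\omega_k}T_y^*\P^n$, the vanishing of $\Tr\bigl(\bigwedge_{k\in S}\omega_k\bigr)$ translates, for every nonempty $S\subseteq\{1,\ldots,r\}$, into
\[
\sum_{j=1}^{d}\ \bigwedge_{k\in S} b_{k,j}=0 .
\]
On the other hand, because $\sigma_j$ is a local isomorphism and $\omega_1\wedge\cdots\wedge\omega_r$ is nonvanishing away from a proper subvariety, for general $y$ the element $\Pi_j:=b_{1,j}\wedge\cdots\wedge b_{r,j}=\bigl(\sigma_j^*(\omega_1\wedge\cdots\wedge\omega_r)\bigr)(y)$ is nonzero for every $j$. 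The theorem is thereby reduced to the following statement in the exterior algebra $\bigwedge^\bullet V$ of a finite-dimensional complex vector space: if homogeneous elements $b_{k,j}$ of positive degree satisfy the displayed relations and $d\le r$, then $\Pi_j=0$ for all $j$.

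I would prove this multilinear statement $P(r)$ by induction on $r$. The base case $r=1$ is immediate, since for $d=1$ the relation $S=\{1\}$ reads $b_{1,1}=0$. For the inductive step, the subconfiguration $\{b_{k,j}\}_{k\le r-1}$ satisfies all relations indexed by subsets of $\{1,\ldots,r-1\}$, so whenever $d\le r-1$ the hypothesis $P(r-1)$ forces $b_{1,j}\wedge\cdots\wedge b_{r-1,j}=0$ and hence $\Pi_j=0$. Applied to our geometric configuration, where this sub-wedge is nonzero, this already yields $d\geq r$, i.e. the weak bound $\irr(X)\geq\mathrm{hol.length}(X)$. What remains, and what upgrades the inequality to a strict one, is to rule out the diagonal case $d=r$.

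The diagonal case is the crux, and I expect it to be the main obstacle. My plan is to encode all of the relations in the single generating identity
\[
\sum_{j=1}^{r}\ \prod_{k=1}^{r}\bigl(1+t_k\,b_{k,j}\bigr)=r
\]
in $\bigwedge^\bullet V\otimes\C[t_1,\ldots,t_r]$: the coefficient of $\prod_{k\in S}t_k$ recovers the relation for $S$, while the $t$-constant term is $\sum_j 1=r$. Extracting the coefficient of $t_1\cdots t_r$ gives only the symmetric consequence $\sum_j\Pi_j=0$, and the difficulty is precisely that the hypotheses are invariant under permuting the index $j$, so no symmetric manipulation alone can isolate an individual $\Pi_j$. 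To break the symmetry I would also use that the $b_{k,j}$ have positive degree, hence are nilpotent, together with the singleton relations $b_{k,r}=-\sum_{j<r}b_{k,j}$. The model is $r=2$: substituting $b_{k,2}=-b_{k,1}$ into the relation for $S=\{1,2\}$ collapses it to $2\,\Pi_1=0$, where the hypothesis $\operatorname{char}=0$ is indispensable. I expect the general diagonal case to follow from the same mechanism — substituting the singleton relations into the top relation and controlling the resulting multinomial coefficients so that a nonzero scalar multiple of some $\Pi_{j}$ survives — but organizing this bookkeeping is the genuine technical heart of the argument. Once $d=r$ is excluded, the induction gives $\deg\varphi\geq r+1$ for every such $\varphi$, whence $\irr(X)>\mathrm{hol.length}(X)$.
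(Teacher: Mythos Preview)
The survey does not itself prove Theorem~\ref{APthm}; it simply quotes the result from \cite{AP2}, and the only argument appearing in the paper is the special case $r=2$ handled in Example~\ref{ex:maxalb}. So there is no in-paper proof to compare against beyond that. Your overall architecture --- extend the trace to holomorphic forms of every degree, use $H^0(\P^n,\Omega^p)=0$ for $p\geq 1$, and reduce to a statement in $\bigwedge^\bullet T_y^*\P^n$ --- is exactly the framework of Alzati--Pirola's original proof, and your reduction to the multilinear lemma is carried out correctly.

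The gap is real, however, and your proposed mechanism for the diagonal case $d=r$ does not work as stated. Substituting the singleton relations $b_{k,r}=-\sum_{j<r}b_{k,j}$ into the top relation alone does \emph{not} isolate a nonzero scalar multiple of any $\Pi_j$. Already for $r=3$, the substitution gives
\[
\Pi_1+\Pi_2+\Pi_3=\Pi_1+\Pi_2-\!\!\sum_{\epsilon\in\{1,2\}^3}\! b_{1,\epsilon_1}b_{2,\epsilon_2}b_{3,\epsilon_3},
\]
and the pure terms $\Pi_1,\Pi_2$ cancel completely, leaving only the vanishing of the six mixed monomials. The $r=2$ case is misleading: there the sign $(-1)^r$ makes $\Pi_2=+\Pi_1$ and the collapse to $2\Pi_1$ is an accident of parity, not a general phenomenon. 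To extract $\Pi_j=0$ for $r\geq 3$ one must combine the substituted top relation with the relations of \emph{all} intermediate orders in a symmetrized way (for $r=3$: multiply each pair relation by the remaining variables, sum, and compare with the mixed-term identity), and the combinatorics is genuinely nontrivial rather than a single multinomial coefficient. This is the actual content of \cite{AP2}; you should consult their argument rather than expect the bookkeeping to organize itself.
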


\begin{corollary}
    The degree of irrationality of $X$ is greater than its Albanese dimension. In particular, the degree of irrationality of an abelian variety is greater than it dimension.
\end{corollary}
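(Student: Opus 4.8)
The plan is to deduce the corollary directly from Theorem~\ref{APthm} by showing that the Albanese dimension is a lower bound for the holomorphic length, after which the statement about abelian varieties falls out immediately. Recall that the \emph{Albanese dimension} of a smooth projective variety $X$ is the dimension of the image of the Albanese map $\mathrm{alb}_X\colon X\to \mathrm{Alb}(X)$, and that $\mathrm{Alb}(X)$ is the dual of $H^0(X,\Omega^1_X)$ modulo the image of $H_1(X,\Z)$.

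First I would recall that the differential of the Albanese map is computed by the global $1$-forms: concretely, pulling back the translation-invariant $1$-forms on $\mathrm{Alb}(X)$ recovers all of $H^0(X,\Omega^1_X)$, and the rank of $d\,\mathrm{alb}_X$ at a general point equals the dimension $a$ of the Albanese image. This means that at a general point $x\in X$ the evaluation map $H^0(X,\Omega^1_X)\to \Omega^1_{X,x}\otimes k(x)=T^*_{X,x}$ has image a subspace of dimension exactly $a$. I would then pick forms $\omega_1,\dots,\omega_a\in H^0(X,\Omega^1_X)$ whose values at $x$ are linearly independent covectors in $T^*_{X,x}$.

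The next step is purely linear-algebraic: if $\omega_1(x),\dots,\omega_a(x)$ are linearly independent in the cotangent space, then their wedge product $\omega_1\wedge\cdots\wedge\omega_a$ is a section of $\Omega^a_X$ whose value at $x$ is nonzero, since the wedge of linearly independent covectors is nonzero in $\bigwedge^a T^*_{X,x}$. Hence $\omega_1\wedge\cdots\wedge\omega_a$ is a nonzero holomorphic form, and since each $\omega_i$ lies in $H^0(X,\Omega^1_X)\subset \bigoplus_i H^0(X,\Omega^i_X)$, this exhibits $a$ forms with nonvanishing wedge product. By the definition of holomorphic length this gives $\mathrm{hol.length}(X)\ge a$, and Theorem~\ref{APthm} then yields $\irr(X)>a$, proving the first assertion. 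For the abelian variety statement, I would simply note that $\mathrm{alb}_X$ is an isomorphism when $X$ is an abelian variety, so its Albanese dimension equals $\dim X$, giving $\irr(X)>\dim X$.

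I expect the main (and really the only) subtle point to be the identification of the Albanese differential with the global $1$-forms and the resulting rank computation at a general point; everything downstream is formal once one knows that the generic rank of $d\,\mathrm{alb}_X$ equals the Albanese dimension and that this rank is read off from $H^0(X,\Omega^1_X)$. Since both facts are standard properties of the Albanese variety, the corollary is essentially immediate from Theorem~\ref{APthm}, and I would keep the write-up to a few lines.
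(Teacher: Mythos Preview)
Your proposal is correct and matches the paper's approach exactly: the paper notes just before Theorem~\ref{APthm} that the holomorphic length is always at least the Albanese dimension, and the corollary is then immediate from the theorem. Your argument simply spells out the standard reason for that inequality (the generic rank of $d\,\mathrm{alb}_X$ is computed by wedging global $1$-forms), which the paper leaves implicit.
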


\begin{example}
    Let $X$ be a hyper-K\"ahler $2n$-fold with holomorphic symplectic form $\omega$. Since $\omega$ is non-degenerate, $\omega^n\neq 0$ and therefore 
    $$\irr(X)>\text{hol.length}(X)=n.$$
\end{example}

In many cases, the bound in Theorem~\ref{APthm} is still the best known bound (see the case of abelian surfaces in \S\ref{surfclass}). In the hyper-K\"ahler setting, Voisin has asked if the Alzati--Pirola bound can be improved as in \cite{M2} to $\text{irr}(X)\geq 4$ for some (or all) hyper-K\"ahler fourfolds \cite[Question 1.10]{Voi22}.

In a different direction, lower bounds for the degree of irrationality of subvarieties of very general abelian varieties have been obtained using a specialization technique due to Pirola. The first example of such a result is a theorem of Pirola from \cite{Pirola} stating that a very general abelian variety of dimension at least $3$ does not contain hyperelliptic curves. It is worth noting that given a curve of gonality $k$ in an abelian variety, by translating one can produce an isotrivial covering family of curves of gonality $k$. Thus, the covering gonality of an abelian variety $X$ coincides with the minimal gonality of any curve in $X$.

We will now briefly explain the idea for how to bound the degree of irrationality of subvarieties of an abelian variety. Given an abelian $n$-fold $X$, suppose $Z\subset X$ is a $d$-dimensional subvariety with a degree $k$ dominant rational map $\varphi \colon Z \dashrightarrow \mathbb{P}^d$. Then the closure of the set of fibers of $\varphi$ is a $d$-dimensional rational subvariety of $\Sym^k X$. Let $\X \rightarrow T$
be a locally complete family of $\underline{d}=(d_1,\dots, d_n)$-polarized abelian $n$-folds, which is to say that the classifying morphism $T \rightarrow \mathcal{A}_{\underline{d}}$ to the moduli space of $\underline{d}$-polarized abelian varieties is dominant. In order to show that such a $Z$ does not exist on a very general member of $\X \rightarrow T$, it suffices to show that if $k$ is small then the relative symmetric product
\[ \Sym^k_T \X \coloneqq \X \times_{T} \cdots \times_{T} \X / \mathfrak{S}_{k} \]
does not contain an irreducible subvariety such that the map to $T$ is dominant, has relative dimension $d$, and has rational generic fiber.

Assuming for contradiction that $\mathcal{R}\subset \Sym^k_T \mathcal{X}$ is such a component, we can then specialize to a locus $T' \subset T$ parametrizing abelian varieties $X$ isogenous to a product $X' \times E$, where $E$ is an elliptic curve (varying in moduli) and $X'$ is a fixed abelian $(n-1)$-fold. If we write $\X_{T'} \coloneqq \X \times_{T'} T$ and $R_{T'} \coloneqq R \times_{T'} T$, then the composition $X\rightarrow X'$ of the isogeny with the projection onto the first factor induces a map
\[ \Sym^k_{T'}(\X_{T'}) \rightarrow \Sym^k X'. \]
Since $R \times_{T'} T \subset \Sym^k_{T'} (\X_{T'})$ is a one-parameter family of rational $d$-folds, its image in $\text{Sym}^k X'$ is in particular uniruled and the expectation is that it has dimension at least $(n+1)$.

This sets up an inductive argument where we specialize to abelian varieties splitting off an elliptic isogeny factor and project to the symmetric product of the complementary factor, at each step obtaining a larger and larger uniruled subvariety of a symmetric product of an abelian variety. However, some assumptions are necessary to argue that the dimension does grow at each step and checking that these assumptions are always satisfied is tricky. Voisin carried out a similar inductive argument in \cite{V} to show that the minimal gonality of a curve in a very general abelian variety of dimension at least $2^{k-2}(2k-1) + (2^{k-2}
-1)(k-2)$ is at least $k+1$.

The inductive argument outlined was carried out for $d=1$ in \cite{M1} to show that a very general abelian variety of dimension at least $2k-4$ does not contain curves of gonality $\leq k$ when $k\geq 4$, thereby proving a conjecture formulated in \cite{V}. Using some results from the theory of generic vanishing, a strengthening of the inductive argument was completed in \cite{CMNP} to prove the following:
\begin{thm}\label{CMNP}
    Let $A$ be a very general abelian variety of dimension at least $3$. The degree of irrationality of a $d$-dimensional subvariety of $A$ is at least $d+(\dim A+1)/2$. In particular, the degree of irrationality of $A$ is at least $(3\dim A+1)/2$.
\end{thm}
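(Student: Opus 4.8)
The plan is to set up the inductive specialization argument sketched in the paragraphs preceding the statement, making the dimension bookkeeping precise enough to yield the bound $d + (\dim A + 1)/2$. Write $n = \dim A$. Suppose for contradiction that a very general abelian $n$-fold $A$ contains a $d$-dimensional subvariety $Z$ admitting a dominant degree-$k$ rational map $\varphi \colon Z \dashrightarrow \P^d$ with $k \leq d + (n-1)/2$. As explained above, the closure of the family of fibers of $\varphi$ gives a $d$-dimensional rational subvariety of $\Sym^k A$, and spreading this out over a locally complete family $\X \to T$ of polarized abelian $n$-folds produces an irreducible component $\mathcal{R} \subset \Sym^k_T \X$ dominating $T$, with relative dimension $d$ and rational generic fiber.

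First I would run the specialization step: restrict to the locus $T' \subset T$ parametrizing abelian varieties isogenous to $X' \times E$ with $X'$ a \emph{fixed} abelian $(n-1)$-fold and $E$ an elliptic curve varying in moduli, then push forward along the projection to obtain a subvariety $W \subset \Sym^k X'$. The crucial observation is that $\mathcal{R}_{T'}$ is a one-parameter (in $E$) family of rational $d$-folds, so its image $W$ is uniruled; because $E$ genuinely varies while $X'$ is fixed, one expects $\dim W \geq d+1$, i.e., the dimension strictly increases under the projection. Iterating, at each stage I split off another elliptic isogeny factor and project, producing a chain of uniruled subvarieties of symmetric products $\Sym^k$ of abelian varieties of decreasing dimension, with the dimension of the swept-out locus increasing by (at least) one each time. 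Running this until no further factor can be split and comparing the accumulated dimension against $\dim \Sym^k(\text{point-dimensional factor})$ forces $k$ to be large, contradicting $k \leq d + (n-1)/2$.

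The main obstacle, exactly as flagged in the text, is verifying that the dimension genuinely grows by one at \emph{each} step: a priori the image $W$ could fail to gain a dimension if the family of rational $d$-folds were isotrivial after projection, or if the uniruling collapsed. This is where I would invoke the generic vanishing input referenced before the statement (the strengthening in \cite{CMNP} over the earlier gonality argument of \cite{M1, V}). Concretely, generic vanishing theory controls the cohomology of the ideal sheaves / the positivity of the relevant Hodge-theoretic data on $X'$, and is used to rule out the degenerate scenario in which the uniruled image fails to grow—this is what upgrades the gonality-style bound (linear in $n$ with a worse constant) to the sharper $d + (n+1)/2$.

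A secondary technical point I would need to handle carefully is the base case and the exact arithmetic of the induction: one must track how the bound on $k$ transforms under each projection $\Sym^k_{T'}(\X_{T'}) \to \Sym^k X'$, ensuring the inductive hypothesis (a very general abelian $(n-1)$-fold has no low-gonality/low-irrationality subvariety of the appropriate dimension) applies with the shifted parameters. The factor of $1/2$ in $(\dim A + 1)/2$ should emerge from the fact that each splitting reduces $\dim A$ by one but, through the generic vanishing refinement, buys back dimension at a rate that accumulates to roughly half of $n$; making this accounting exact—rather than the cruder linear-in-$n$ count that the pure gonality argument yields—is the genuinely delicate part, and for the full details I would defer to \cite{CMNP}.
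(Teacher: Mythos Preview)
The paper is a survey and does not give a proof of this theorem; it only outlines the specialization-and-projection strategy in the paragraphs preceding the statement and then cites \cite{CMNP} for the actual argument (including the generic vanishing input). Your proposal is a faithful expansion of exactly that outline---the same inductive step, the same acknowledged difficulty (dimension growth at each projection), and the same deferral to \cite{CMNP} for the technical core---so it matches the paper's approach, though like the paper it is a sketch rather than a proof.
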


Note that this is an improvement of the Alzati--Pirola bound by $(\dim A-1)/2$, although it only applies to subvarieties of very general abelian varieties. The theorem above remains the best result for very general abelian varieties which applies regardless of the degree of the polarization. In his thesis, the second named author generalizes the results in \cite{M2} (see \S\ref{kodsection}) and proves that for each $n$, there are integers $M_n$ such that if $d_n\nmid M_n$ then a very general abelian $n$-fold with polarization type $(d_1=1, d_{2},\ldots, d_n)$  has degree of irrationality at least $2n$.

\begin{question}
    For a very general polarized abelian variety of dimension $n \geq 3$, does $\irr(A)$ grow with the degree of the minimal polarization?
\end{question}

\begin{problem}
    Give interesting constructions of dominant rational maps of low degree from some abelian $n$-fold to $\P^{n}$. For instance, Jacobians of curves or abelian varieties with extra automorphisms are a natural starting point.
\end{problem}

The specialization technique discussed above also yields cycle-theoretic results of independent interest. For a smooth projective variety $X$ one can consider the map
$$\text{Sym}^k X\longrightarrow \text{CH}_0(X) $$
which associates to a $k$-tuple of points on $X$ the associated effective zero-cycle of degree $k$. Fibers of this map provide fine geometric information about rational equivalence of zero-cycles on $X$. For example, when $X$ is a K3 surface and $k=1$, an integral curve in $X$ contracted by this map is called a constant cycle curve and was studied in \cite{Huy}. See \cite[Theorem 1.1]{CMNP} for a cycle-theoretic analogue of Theorem \ref{CMNP}.

The lower bounds on the covering gonality and the degree of irrationality discussed in this section are not expected to be sharp. On the other hand, it is quite difficult to produce curves that are not complete intersections on general abelian varieties of large dimension since they do not arise as Jacobians or ramified Pryms. As a result, there is an enormous gap between known lower bounds and upper bounds; where the actual asymptotic for $n$-dimensional abelian varieties lies as $n \rightarrow \infty$ remains a complete mystery, and one is in a similar situation when it comes to the degree of irrationality.

\begin{question}
Does the minimal genus of a curve on a very general abelian $n$-fold grow slower than $C \cdot n!$ for some positive constant $C$? What can one say about the rate of growth of the degree of irrationality of a very general abelian variety of dimension $g$?
\end{question}

\vspace{0.5cm}

\section{Open questions and complements}
\label{open}

In this section, we would like to revisit some of the questions that appeared in \cite{BDELU17} and raise a number of new problems.

\subsection{Products of curves}
\label{prodsection}
Consider a product of two curves $S = C_{1} \times C_{2}$. As mentioned in Example~\ref{ex:productcurves}, the covering gonality is the minimum of $\gon(C_{1})$ and $\gon(C_{2})$, and can be computed by the trivial family coming from one of the projections. Moreover, one has the following bounds for the degree of irrationality (see Example \ref{ex:productcurves}):
\[ \max \{ \gon(C_{1}), \gon(C_{2}) \} \leq \irr(C_{1} \times C_{2}) \leq \gon(C_{1}) \cdot \gon(C_{2}). \]
Furthermore, the methods in \cite{CM23} lead to lower bounds of the form $\gtrsim \gon(C_{1}) + \gon(C_{2})$.

In \cite{CM23}, it was shown that any product of hyperelliptic curves (of genus $\geq 2$) has degree of irrationality equal to 4. There are also some partial results about curves of higher gonality. For instance, the authors showed that the upper bound on $\irr(C_{1} \times C_{2})$ is achieved provided that the gonalities of $C_{1}$ and $C_{2}$ are small relative to their genera. This forces both curves to be quite special. However, we expect the upper bound to be achieved when $C_{1}$ and $C_{2}$ are (very) general in moduli.

\begin{conjecture}
    Let $C_{1}, C_{2}$ be curves of genera $\geq 2$ which are very general in moduli. Then $\irr(C_{1} \times C_{2}) = \gon(C_{1}) \cdot \gon(C_{2})$.
\end{conjecture}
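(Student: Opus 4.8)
The upper bound $\irr(C_1\times C_2)\le \gon(C_1)\gon(C_2)$ is already recorded in Example~\ref{ex:productcurves}, so the content is the matching lower bound; write $\gamma_i\coloneqq\gon(C_i)$. The plan is to run a Cayley--Bacharach argument adapted to the product structure. First I would fix a dominant generically finite $\varphi\colon C_1\times C_2\dashrightarrow\P^2$ of degree $d$ and study a general fibre $F=\varphi^{-1}(q)$, a reduced set of $d$ points, together with the correspondence curve $D=\overline{\varphi^{-1}(\ell)}$ over a general line. Since $C_1,C_2$ are very general there are no nontrivial correspondences, so $\NS(C_1\times C_2)=\Z F_1\oplus\Z F_2$ is generated by the fibre classes $F_1=\{\text{pt}\}\times C_2$ and $F_2=C_1\times\{\text{pt}\}$; this pins down the bidegrees of every auxiliary divisor and, crucially, guarantees that no ``unexpected'' curve is available to host a cheap configuration of points. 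Because $\P^2$ carries no holomorphic $2$-forms, $F$ satisfies Cayley--Bacharach with respect to $|K_{C_1\times C_2}|$.

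The key is to reinterpret this through the K\"unneth decomposition $H^{0}(K_{C_1\times C_2})=H^{0}(K_{C_1})\otimes H^{0}(K_{C_2})$. Assuming $g_i\ge 3$ (the genus-$2$ case is hyperelliptic and already settled in \cite{CM23}), the canonical maps embed $C_i\hookrightarrow\P^{g_i-1}$; write $v(x),w(y)$ for the corresponding vectors, so that a form $A\in H^{0}(K_{C_1})\otimes H^{0}(K_{C_2})$, viewed as a $g_1\times g_2$ matrix, evaluates at $(x,y)$ as the pairing $v(x)^{\top}A\,w(y)$. A direct computation then shows that $F$ satisfies Cayley--Bacharach if and only if no tensor $T_k\coloneqq v(x_k)\otimes w(y_k)$ lies outside the span of the others, which in the generic situation amounts to a relation $\sum_k\lambda_k\,T_k=0$ with every $\lambda_k\ne 0$, i.e.\ $\sum_k\lambda_k\,v(x_k)w(y_k)^{\top}=0$ as a matrix. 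Thus the problem becomes sharply geometric: the minimal number of terms in such a rank-one syzygy, with points on the two canonical curves, is what bounds $d$ from below. The product-of-gonal-maps fibre realizes $d=\gamma_1\gamma_2$ precisely because a fibre of a $g^1_{\gamma_i}$ spans only a $\P^{\gamma_i-2}$, forcing the $\gamma_1\gamma_2$ grid tensors into the $(\gamma_1-1)(\gamma_2-1)$-dimensional subspace $V_1\otimes V_2$ and hence into relation.

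The naive separation bound of Proposition~\ref{prop:covgon} is \emph{not} enough: degenerate configurations---say all $x_k$ equal---produce all-nonzero syzygies of size only $\gamma_2$, so one must use that $F$ is a genuine fibre. The plan is to quantify the spread of coordinates. If the fibre lay in a single ruling $\{x\}\times C_2$ then $\varphi$ would factor through $\pi_1$, contradicting generic finiteness; more precisely I would show that as $q$ varies the first-coordinate cycle $p_{1*}(F)$ sweeps a genuinely $2$-dimensional family of effective $0$-cycles on $C_1$, forcing at least $\gamma_1$ distinct first coordinates (and symmetrically $\gamma_2$ distinct second coordinates) via Brill--Noether genericity. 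I would then group the syzygy by first coordinate, writing $\sum_i v(x^{(i)})\otimes u_i=0$ with $u_i=\sum_{k:\,x_k=x^{(i)}}\lambda_k\,w(y_k)$. Where the $v(x^{(i)})$ are linearly independent the groups decouple, each $u_i=0$ being a dependency among points of the second canonical curve and hence of size $\ge\gamma_2$ (a dependency forces those points into a $g^1$ of degree at least the gonality); combined with $\ge\gamma_1$ groups this yields $d\ge\gamma_1\gamma_2$.

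The hard part will be the mixed regime in which the first coordinates are themselves in special position---by geometric Riemann--Roch they then lie on some $g^1$ on $C_1$---so that the groups no longer decouple and dependencies among the $v(x^{(i)})$ interlock with those among the $w(y_k)$. Controlling this interaction is exactly where the additive methods of \cite{CM23} stop short, and I expect the decisive input to be a sharp secant/Brill--Noether estimate on the Segre image of the two canonical curves in $\P^{g_1g_2-1}$, bounding below the number of decomposable tensors from general canonical curves that can cancel, uniformly over all intermediate specializations. Establishing that estimate---together with verifying that very-generality (minimal $\NS$ and the absence of unexpected special linear series) forbids any cheaper syzygy supported on an auxiliary curve---would complete the argument, and is the step I expect to require genuinely new ideas.
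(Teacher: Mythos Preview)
The statement you are addressing is a \emph{conjecture}: the paper does not prove it and explicitly presents it as open. There is therefore no ``paper's own proof'' to compare against. What the paper offers is a heuristic (the paragraph immediately following the conjecture): since $h^{2,0}(C_1\times C_2)=g_1g_2$, one expects enough $2$-forms to separate the roughly $\gamma_1\gamma_2\approx g_1g_2/4$ points of a fibre, and one route would be to argue that fibres of a low-degree map must resemble ``grid points'' in $C_1\times C_2$ and then interpolate with sections of $K_{C_1\times C_2}$. Your Cayley--Bacharach/K\"unneth outline is exactly an attempt to make this heuristic rigorous, and recasting the condition as a rank-one tensor syzygy on the Segre of the two canonical curves is a sensible way to organise it.

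But your proposal is not a proof, and you acknowledge this. The gap you flag---the ``mixed regime'' where the distinct first coordinates themselves lie on a $g^1$ of $C_1$, so the $v(x^{(i)})$ are dependent and the groups fail to decouple---is not a residual technicality: it is precisely the regime containing the extremal configuration. A fibre of the product of gonal maps has first coordinates forming a $g^1_{\gamma_1}$-fibre, hence spanning only a $\P^{\gamma_1-2}$, so the ``decoupled'' case never actually sees the configurations realising $d=\gamma_1\gamma_2$; the entire content of the conjecture lives in the part you defer to ``genuinely new ideas''. Your intermediate claim that a general fibre has at least $\gamma_1$ distinct first coordinates is also asserted rather than argued; the appeal to ``Brill--Noether genericity'' is not a proof. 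One small inaccuracy: the parenthetical that the genus-$2$ case is settled by \cite{CM23} only covers the case where \emph{both} factors are hyperelliptic; a very general genus-$2$ curve crossed with a very general non-hyperelliptic curve is not handled there. In summary, your outline is aligned with the paper's suggested approach, but the step you single out as open is the whole problem---which is why the statement is recorded as a conjecture.
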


This expectation stems from the fact that the space of differential 2-forms has dimension $g_{1} g_{2}$. Thus, we would expect that there are enough forms to separate the fibers of a map $\varphi \colon C_{1} \times C_{2} \dashrightarrow \P^{2}$ if $\deg \varphi \leq \gon(C_{1}) \cdot \gon(C_{2}) \approx \frac{g_{1}g_{2}}{4}$. One approach towards the conjecture that would make the above heuristic precise is to argue that the fibers of such a map $\varphi$ must form a pattern that closely resembles ``grid points" inside $C_{1} \times C_{2}$, and that sections of $K_{C_{1} \times C_{2}}$ can be made to interpolate through all but one of the points.

Another approach to the conjecture above would be to show that any curve $C'$ which dominates both $C_{1}$ and $C_{2}$ has gonality at least $\gon(C_{1}) \cdot \gon(C_{2})$. A curve $C'$ that maps to both $C_{1}$ and $C_{2}$ is called a \textit{correspondence}. In the context of measures of irrationality, correspondences have appeared in work of Lazarsfeld and the second author \cite{LM23,LMII} (see \S\ref{measass}). At the moment, there are no sharp lower bounds for the gonality of a correspondence between two curves that are (very) general in moduli.

It is also interesting to ask what happens when one takes products of more copies of curves:

\begin{question}
Given $n$ very general hyperelliptic curves $C_1,\dots, C_n$, what is the degree of irrationality of their product $C_{1} \times \cdots \times C_{n}$? Does it grow sub-exponentially in $n$?

\end{question}

\subsection{Polarized degree of irrationality}\label{polirr}

One of the inherent difficulties in computing the degree of irrationality of a smooth projective variety $X$ is that rational maps $X\dashrightarrow \mathbb{P}^n$ are given by the choice of a line bundle $L$ on $X$ and an $(n+1)$-dimensional subspace of $H^0(L)$. Even if $\text{Pic}^0(X)$ is discrete, there is a countable union of Grassmanians parametrizing such rational maps and we are unable to bound the linear systems computing measures of irrationality. For instance, Lazarsfeld asks:

\begin{question}[Lazarsfeld]
    Given a surface $S$ with $\Pic(S) = \Z$, can one bound the integer $m$ for which there exists a sublinear series $|V| \subset |mL|$ computing $\irr(S)$? What about bounding the integer $m$ for which $\covgon(S)$ is realized by a family of curves algebraically equivalent to $mL$?
\end{question}

To simplify the situation, Moretti \cite{Moretti23} fixes the line bundle $L$, i.e., he considers a polarized variety $(X, L)$, and studies maps given by sub-linear series of $|L|$:
\[ X \dashrightarrow \P (H^{0}(L)^{\vee}) \dashrightarrow \P^{n}. \]
These give rise to an interesting vector bundle construction analogous to the one of Lazarsfeld--Mukai bundles, which were used by Lazarsfeld to study linear series on smooth curves contained in K3 surfaces \cite{Lazarsfeld86}. Given a variety $X$ and a linear series $|V|$ corresponding to a rational map $\varphi_{V} \colon X \dashrightarrow \P^{n}$, one may consider the kernel bundle, which sits in the following exact sequence
\[ 0 \rightarrow E^{\vee} \rightarrow V \otimes \O_{S} \rightarrow L. \]
Note that the cokernel of the map on the right is supported on the base-locus of $\varphi_{V}$. Since $H^0(L^\vee)=0$, we can view $\P(V^{\vee})$ as a subspace of $\P(H^{0}(E))$, and the fibers of $\varphi_{V}$ can be viewed as the zero loci $Z(s)$ of sections $s$ of $E$; over $[s] \in \P(V^{\vee}) = \P^{n}$, the fiber is given by $Z(s)$ (excluding the base locus).

Moretti's main observation is that, under some additional assumptions on $E$, this process can be reversed to construct rational maps to $\P^{n}$. Namely, given a sufficiently general vector bundle $E$ of rank $n$ and at least $n+1$ linearly independent sections generating $E$ in codimension 2, one can construct a dominant rational map $X\dashrightarrow \mathbb{P}^n$ of degree $\leq c_{n}(E)$. Furthermore, this can be improved by imposing base points; for $V^{\vee} \in \Gr(n+1, H^{0}(E))$, the base locus $\Bl(V^{\vee})$ is the union of the locus where the sections do not generate $E$ and of the locus where $E$ is not locally free, and we let $\text{bl}(V^{\vee}) \coloneqq \deg(Z(s) \cap \Bl(V^{\vee}))$, where $s \in V^{\vee}$ is a general section. With this notation, one obtains (see \cite[Proposition 1.5]{Moretti23})
\[ \deg(\varphi_{V}) = c_{n}(E) - \text{bl}(V^{\vee}). \]

\begin{example}[Moretti]
Given $(X, L)$ a K3 surface of genus 6 with $\rho(X) = 1$, we will show that $\irr(X) = 3$. Since $L$ is a genus $6$ polarization, it satisfies $L^{2} = 10$ and $h^{0}(X, L) = 7$. By work of Mukai \cite{Mukai87}, one can find a unique stable rank 2 vector bundle $E$ of minimal degree $c_{2}(E)$ with the following invariants:
\[ c_{1}(E) = L, \quad c_{2}(E) = 4, \quad h^{0}(E) = 5. \]
For any point $P \in X$, the kernel $V_P^\vee$ of the evaluation map $H^0(E)\longrightarrow H^0(E\otimes \mathcal{O}_{X,p})$ has dimension 3. Note that a generic section of $V_{P}^{\vee}$ vanishes at $P$ with order $1$, so that
\[ \deg(\varphi_{V_{P}}) = c_{2}(E) - 1 = 3. \]
On the other hand, $\irr(X) = 2$ would imply that $X$ admits a rational anti-symplectic involution. For a K3 surface $X$, it is straightforward to show that $\Bir(X) = \Aut(X)$ so one would then conclude that $\Z/2\Z \leq \Aut(X)$. However, it is well-known that this only occurs for a K3 surface $X$ with Picard rank 1 if $X$ is a double-cover of $\P^{2}$ \cite[pg. 342, Corollary 2.12]{Huybrechts16}. Therefore, $\irr(X) = 3$.
\end{example}

In a recent preprint, Mason \cite{mason25} has adapted some of these methods to study hyperelliptic surfaces. He used this to complete the remaining case in the classification of hyperelliptic surfaces by constructing degree three maps from certain hyperelliptic surfaces to $\P^{2}$.

\subsection{Variational properties}\label{section:VariationalProperties}

For a smooth family of varieties, one can ask how the degree of irrationality of the fibers varies. First of all, for curves it is well-known that the gonality cannot increase under specialization (this holds even in flat families). Therefore, in smooth families of varieties the covering gonality cannot increase under specialization (see e.g. \cite[Proposition 2.2]{GK19} or \cite[Proposition 2.7]{Chen24}). On the other hand, it is still an open problem as to whether this is always the case for the degree of irrationality.

\begin{question}\label{prob:inequalitydegirr}
    Over $\C$, let $\X \rightarrow B$ be a smooth proper morphism over a pointed smooth connected curve $(B,0)$. If the very general fiber $\X_{b}$ satisfies $\irr(\X_{b}) \leq d$, then is it true that $\irr(\X_{0}) \leq d$?
\end{question}

\noindent When $d = 1$, the statement of Question~\ref{prob:inequalitydegirr} follows from the work of Kontsevich and Tschinkel \cite{KT}, who showed that rationality specializes even in mildly singular families. For higher values of $d$, even for smooth families of algebraic surfaces surprisingly little is known. In \cite[Proposition C]{CS20}, the first author and Stapleton used a specialization statement about maps to ruled varieties to show that Question~\ref{prob:inequalitydegirr} is true for special families such as simply connected surfaces or strict Calabi-Yau threefolds. In fact, for surfaces the argument works more generally when the central fiber does not admit any maps to a curve of genus $g \geq 1$. In particular, this shows that the degree of irrationality in a family of K3 surfaces cannot increase under specialization.

Under an additional assumption, which is that the degree of irrationality on the very general fiber can be computed as a rational group quotient, the first author and Esser showed that the degree of irrationality is lower-semicontinuous for families of integral klt surfaces \cite{CE24}. However, in general the authors do not expect the degree of irrationality to be lower-semicontinuous in smooth families. For instance, a negative answer to Question~\ref{prob:inequalitydegirr} would follow from the existence of a pair of elliptic curves $E, E'$ such that $\irr(E \times E') = 4$. Indeed, one could then take a one-parameter family of $(1,2)$-polarized abelian surfaces specializing to $E\times E'$, such that the general fiber has degree of irrationality $3$ \cite{Y}.

\subsection{Other measures of irrationality}\label{subsec:OtherMeasures}

There is another measure of irrationality that one can define which mimics the notion of rational connectedness:

\begin{definition}
The \textit{connecting gonality} of $X$ is
\[ \conngon(X) \coloneqq \min\begin{cases} c \in \mathbb{Z}_{\geq 0}\;\Bigg|\begin{rcases} \forall p, q \in X \text{ two general points}, \ \exists \text{ an} \\ \text{integral curve $C \ni p, q$ of gonality } c \end{rcases}.\end{cases} \]
\end{definition}

As far as the connecting gonality is concerned, we have the following result for smooth surfaces $X\subset \mathbb{P}^{3}$ of degree $d \geq 4$. For any $x\in X$, the intersection $C_x \coloneqq T_x\cap X$ of $X$ with the embedded tangent plane to $X$ at $x$ is an irreducible plane curve of degree $d$ with a double point at $x$. Lopez and Pirola show in \cite{LP} that varying $x$ yields a covering family of minimal gonality and that it is unique for a general surface $X$. Note that the projective dual of $X$ is a surface $\check{X}\subset \check{\mathbb{P}}^3$. Given two very general points $y,y'\in X$, there is a line $l\subset \check{\mathbb{P}}^3$ parametrizing planes containing $y$ and $y'$. Since this line intersects $\check{X}$, two very general points of $X$ can be connected by a curve of the form $C_{x}$. It follows that
\[ \textup{conn.gon}(X)=\textup{cov.gon}(X)=d-2. \]

For self-products of a variety, Ellenberg has asked about the following:

\begin{question}[Ellenberg]
    Given a variety $X$, we can define $f_{X}(k) \coloneqq \sqrt[k]{\irr(X^{k})}$. Note that
    \[ f_{X}(k) \leq \irr(X) \qquad \forall k\in \mathbb{Z}_{>0}, \]
    and $f_{X}(k) \geq f_{X}(rk)$ for all $r \in \Z_{> 0}$. Is $f_{X}$ non-increasing? What other properties does the function $f_{X}$ have?
\end{question}

\begin{question}
Are there any higher-dimensional varieties $X$ for which degree of irrationality of $\Sym^{k}X$ grows exponentially in $k$?
\end{question}

For a curve $C$ of genus $g$, the question above is false since $\Sym^{k}C$ is birational to a product of $\Jac(C)$ with projective space for $k \geq g$. If $X$ is any variety with even-degree holomorphic forms, then interestingly one can give linear lower bounds in $k$ for the degree of irrationality of $\Sym^{k}(X)$ using holomorphic length (see \S3).

Bastianelli has defined two other notions that mimic unirationality and stable irrationality.

\begin{definition}
The \textit{degree of uni-irrationality} of $X$ is defined as
    \[ \uniirr (X) \coloneqq \min\{ \irr(T) \mid \exists \ T \dashrightarrow X \text{ dominant} \}. \]
\end{definition}

\begin{definition}
The \textit{stable degree of irrationality} of $X$ is defined as
\[  \stabirr(X) \coloneqq \min\{ \irr(X \times \P^{m}) \mid m \in \Z_{\geq 0} \}. \]
\end{definition}

\noindent Note that $\irr(X \times \P^{m+1}) \leq \irr(X \times \P^{m})$ so the stable degree of irrationality can also be defined as a limit.

Bastianelli \cite{Bastianelli17} computed these invariants for smooth surfaces $S \subset \P^{3}$ of degree $\geq 5$. Interestingly, $\uniirr(S)= d-2$ iff $S$ contains a rational curve (and otherwise it is equal to $d-1$). Shortly after that, Yang \cite{Yang} showed that these invariants coincide with the degree of irrationality for very general hypersurfaces $X \subset \P^{n+1}$ of degree $d\geq 2n+2$.

\subsection{Measures of association}\label{measass}
Measures of irrationality aim to quantify how far a variety is from being birational to $\mathbb{P}^n$, but one can ask how far two varieties $X$ and $Y$ are from being birational. This is the goal of measures of associations, birational invariants of pairs of varieties $(X,Y)$ which were first studied in two articles of Lazarsfeld and the second author \cite{LM23,LMII}. Typically, varieties $X$ and $Y$ of the same dimension will not admit dominant rational maps between each other, so one cannot consider the minimal degree of such maps as we did to define the degree of irrationality. However, there is always an irreducible subvariety of $Z\subset X\times Y$ mapping generically finitely to $X$ and $Y$.

We define the \emph{correspondence degree} of the pair $(X,Y)$ as
$$\corrdeg(X,Y)=\text{min}_{Z\subset X\times Y} (\deg(Z/X)\cdot\deg(Z/Y)),$$
where the minimum is taken over all irreducible subvarieties of $X\times Y$ mapping generically finitely to each of the factors. The logarithm of this invariant gives a metric on the set of birational types of $n$-folds. This invariant is related to the degree of irrationality of $X$ and $Y$ by the inequality
$$\corrdeg(X,Y)\leq \text{irr}(X)\cdot\text{irr}(Y),$$
which is obtained by choosing for $Z$ an appropriate irreducible component of the (birational) fiber product of dominant rational maps to $\mathbb{P}^n$.

Similarly, one defines the joint covering gonality of $X$ and $Y$ to be
$$\covgon(X,Y)=\text{min}_{Z\subset X\times Y}\covgon(Z),$$
where again the minimum is taken over all irreducible subvarieties of $X\times Y$ mapping generically finitely to each of the factors. A similar argument as in the case of the correspondence degree gives
$$\max(\covgon(X),\covgon(Y))\leq \covgon(X,Y)\leq \covgon(X)\cdot \covgon(Y).$$
Our intuition is that for most pairs of varieties $(X,Y)$ the upper bounds above should be attained, which is to say that $X$ and $Y$ should be ``maximally birationally independent". It is not so clear how to make this intuition precise, and we believe there are many ways to achieve this.

To compare varieties $X$ and $Y$ of dimensions $m$ and $n$ respectively, where $m\leq n$, it is natural to consider the \emph{stable correspondence degree}:
$$\text{stab.corr}(X,Y)=\text{min}_{r\geq n}\corrdeg(X\times \mathbb{P}^{r-m},Y\times\mathbb{P}^{r-n}).$$
This invariant was introduced and studied in \cite {Pas}, where some of the results of \cite{LM23} were generalized to the stable correspondence degree.

\subsection{Arithmetic measures of irrationality}

Finally, we would like to touch upon the connection between measures of irrationality and some more arithmetic notions. Throughout this section, let $X$ denote a smooth, projective, and geometrically integral variety over a number field $k$. The degree of irrationality $\irr(X/k)$ still makes sense when the map is defined over $k$, and the same for the covering gonality.

In the arithmetic world, one can also define various notions which are tied to the existence of some (or many) degree $d$ points, namely points whose residue field is a degree $d$ algebraic extension over $k$. This leads to

\begin{definition}
    \begin{enumerate}
    \item The density degree set $\delta(X/k)$ is the set of all positive integers $d$ such that the set of degree $d$ points on $X$ is Zariski dense.
    \item The minimum density degree $\min(\delta(X/k))$ is the smallest positive integer in $\delta(X/k)$.
    \end{enumerate}
\end{definition}

Note that by considering the point together with its Galois conjugates, a degree $d$ point on $X$ gives rise to a $k$-rational point on $\Sym^{d}(X)$. We refer the reader to the excellent survey of Viray--Vogt \cite{VV24} for a more in-depth introduction to these ideas.

For a curve $C/k$, the main strategy for understanding dense sets of degree $d$ points has been to pass to the $d$-th symmetric product where they become rational points. Either infinitely many of these rational points are contained in a fiber of the Abel--Jacobi map, in which case $C$ has a \textit{geometric} explanation for them (some sort of linear series), or the image of the Abel--Jacobi map contains infinitely many rational points. In the latter case, one can then apply Faltings' theorem. This idea has appeared in the work of many authors including Abramovich \cite{Abramovich91}, Abramovich--Harris \cite{AH91}, Debarre--Fahlaoui \cite{DF93}, and Harris--Silverman \cite{HS1991}. The analogue of Question~\ref{Q:irr=densitydeg} for smooth plane curves was answered by Debarre--Klassen \cite{DK94}, and this was extended to smooth ample curves on some other surfaces by Smith--Vogt \cite{SmithVogt22}.

Before considering higher-dimensional varieties, we remark that one can define ``potential" analogues of the density degree set and minimum density degree, which take into account possible finite extensions $k'/k$. We will not attempt to define these definitions precisely since they will not appear later in this survey. However, the relevance is that a map defined over the algebraic closure $\bar{k}$ is always defined over a finite field extension, so many of the existing constructions of maps that compute the degree of irrationality lead to elements in the potential density degree set. In fact, to produce a dense set of degree $d$ points a correspondence is often enough:

\begin{example}
If $\irr(X/k) = d$, then a consequence of Hilbert irreducibility (see \cite[Proposition 3.3.1]{VV24}) is that $d \in \delta(X/k)$. More generally, suppose there exists a nontrivial correspondence
\begin{center}
\begin{tikzcd}[column sep=small]
& Z \arrow[ld, swap, "f"] \arrow[rd, "g"] & \\
\P^{n} & & X
\end{tikzcd}
\end{center}
where $f, g$ are generically finite (and do not factor non-trivially through a common map) and $f$ has degree $d$. We claim that $d \in \delta(X/k)$. To see this, note that the pre-image under $f$ of a general line $\ell$ defined over $k$ is a smooth irreducible curve $C$ with $d \in \delta(C/k)$ by Hilbert irreducibility. Since the map $C \rightarrow g(C)$ is birational, varying $\ell$ gives a family of curves which cover $X$ and have $d$ in their density degree set. Therefore, $d \in \delta(X/k)$.
\end{example}

\begin{question}\label{Q:irr=densitydeg}
Consider a sufficiently large positive integer $d$ and $X\subset \mathbb{P}^3_\Q$ a smooth projective surface of degree $d$. Is it true that $\text{min }\delta(X/\mathbb{Q})=\text{irr}(X/\mathbb{Q})?$
\end{question}
One expects most degree $d$ points on high degree surfaces in $\mathbb{P}^3$ to be supported on lines defined over $\mathbb{Q}$. In \cite{HKM} Huang, Kadets, and the second author prove such a statement for the universal hypersurfaces in $\mathbb{P}^{n+1}$. Of course, one can always look at smooth surfaces containing a rational curves defined over $\mathbb{Q}$, which forces us to either ask about density of non-collinear points or density in moduli of surfaces admitting non-collinear points.
\begin{question} 
Consider a sufficiently large positive integer $d$ and $X\subset \mathbb{P}^3_\Q$ a smooth projective surface of degree $d$. Can degree $d$ points on $X$ which are not contained in a line be dense in $X$? Is the subset of $|\mathcal{O}_{\mathbb{P}^3_\Q}(d)|$ consisting of surfaces defined over $\mathbb{Q}$ containing a degree $d$ point not lying in a line dense?
\end{question}

One case we would like to mention is that of products of curves, where there has been some recent progress towards understanding their density degree set \cite{BFGPRV25}. In a slightly different direction, there has been a great deal of interest in understanding rational and quadratic points on hyperelliptic curves. It is expected that most do not have any unexpected rational or quadratic points (cf. \cite{Bhargava2013, PS14, Granville07, LS24} and the references therein). In light of this as well as the fact that the degree of irrationality of any product of hyperelliptic curves is equal to 4, it is tempting to conjecture the following for the density degree set:

\begin{conjecture}
    Most pairs of hyperelliptic curves $(C, D)$ of sufficiently large genera satisfy
    \[ \min \delta(C \times D/\Q) = 4. \]
\end{conjecture}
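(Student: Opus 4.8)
The plan is to prove both inequalities $\min\delta(C\times D/\Q)\le 4$ and $\min\delta(C\times D/\Q)\ge 4$. For the upper bound, observe that since $C$ and $D$ are hyperelliptic over $\Q$ their canonical degree-two maps $C\to\P^1$ and $D\to\P^1$ are defined over $\Q$ (the hyperelliptic pencil is unique, hence Galois invariant), so their product is a degree-four dominant map $C\times D\to \P^1\times\P^1$ over $\Q$; as the smooth quadric $\P^1\times\P^1$ carries a $\Q$-point it is $\Q$-rational, and composing gives a degree-four dominant rational map $C\times D\dashrightarrow\P^2$ over $\Q$. Thus $\irr(C\times D/\Q)\le 4$, and Hilbert irreducibility (\cite[Proposition 3.3.1]{VV24}) yields $4\in\delta(C\times D/\Q)$, so $\min\delta(C\times D/\Q)\le 4$. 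It remains to show that closed points of degree $1,2,3$ are not Zariski dense.

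Degrees one and three are the easier cases. Faltings' theorem gives finiteness of $C(\Q)$ and $D(\Q)$, hence of $(C\times D)(\Q)$, so $1\notin\delta$. For a closed point $P$ of degree $3$ the residue field degrees of its two projections divide $3$; if either projection has degree $1$ then $P$ lies on one of the finitely many fibers over $C(\Q)$ or $D(\Q)$, a finite union of curves, while the case of two degree-three projections forces simultaneous \emph{irreducible} cubic points on $C$ and $D$ over a common cubic field. Here I would invoke the ``most pairs'' hypothesis that $\Jac(C)$ and $\Jac(D)$ are simple, non-isogenous, and of dimension $\ge 4$: then the only positive-dimensional fibers of $\Sym^3 C\to\Pic^3(C)$ arise from the hyperelliptic pencil and produce \emph{reducible} degree-three divisors, so Faltings applied to the Abel--Jacobi image $W_3\subsetneq\Jac(C)$ shows the irreducible cubic points of $C$ (and likewise $D$) are finite. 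Hence degree-three points are supported on a finite union of curves, and $3\notin\delta$.

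The crux is degree two. The same Galois analysis confines points with a degree-one projection to finitely many fibers, so the essential case is a point whose two projections are both genuinely quadratic over a common quadratic field $K$. By the Harris--Silverman structure of quadratic points on a hyperelliptic curve, all but finitely many such points on $C$ are hyperelliptic fibers $(t,\sqrt{f(t)})$ with $t\in\Q$ and residue field $\Q(\sqrt{f(t)})$, and similarly for $D$; matching the fields forces $f(t)h(s)\in(\Q^\times)^2$. Writing $C\colon y^2=f(x)$ and $D\colon w^2=h(z)$, these data are exactly the $\Q$-points of the surface
\[ W\ \colon\ v^2=f(x)\,h(z), \]
namely the quotient $(C\times D)/\langle(\iota_C,\iota_D)\rangle$ realized as a double cover of $\P^1\times\P^1$. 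Up to the finitely many sporadic contributions, therefore, $2\in\delta(C\times D/\Q)$ if and only if $W(\Q)$ is Zariski dense in $W$. Since $W$ is of general type (for $g_C,g_D\ge 2$), the Bombieri--Lang conjecture predicts non-density, which would finish the proof.

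Establishing non-density of $W(\Q)$ unconditionally is the main obstacle, and it is genuinely a case of Bombieri--Lang: for generic $(C,D)$ with simple non-isogenous Jacobians, a de Franchis-type argument shows $W$ admits \emph{no} dominant map to a curve of genus $\ge 2$ (the two projections only descend to $W\to\P^1$), so one cannot reduce to Faltings on a base curve. To make progress for \emph{most} pairs I would exploit the two twist fibrations $W\to\P^1_x$ and $W\to\P^1_z$, whose fibers over $t,s\in\Q$ are the quadratic twists $D^{(f(t))}$ and $C^{(h(s))}$ of genus $\ge 2$, so that a $\Q$-point of $W$ is a simultaneous $\Q$-point of such a pair of twists. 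The plan is to combine average-rank results for families of quadratic twists (in the spirit of Bhargava and collaborators), which force $D^{(f(t))}(\Q)=\emptyset$ for a density-one set of $t$, with the uniform bounds on rational points on curves of fixed genus from the uniform Mordell--Lang theorem of Dimitrov--Gao--Habegger and K\"uhne, in order to confine $W(\Q)$ to a finite union of fibral curves and contradict density. Carrying out this last step---quantifying ``most'' and controlling the sparse set of twists that acquire points---is precisely where the difficulty lies, and where the restriction to most pairs of large genus is essential.
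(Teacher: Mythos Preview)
The statement is a \emph{conjecture} in the paper, not a theorem; the paper offers no proof, only the one-line remark that ``one can reduce this to the Bombieri--Lang conjecture about rational points on the quotient of $C\times D$ by the diagonal involution.'' Your proposal arrives at precisely this reduction: your surface $W\colon v^{2}=f(x)h(z)$ is exactly $(C\times D)/\langle(\iota_C,\iota_D)\rangle$, and you correctly argue that (modulo finitely many sporadic and fibral contributions) $2\in\delta(C\times D/\Q)$ is equivalent to Zariski density of $W(\Q)$. Your treatment of the upper bound and of degrees $1$ and $3$ fills in details the paper does not even mention, and those steps are sound under the genericity hypotheses you impose (simple Jacobians of genus $\ge 4$, so that Faltings applies to $W_2$ and $W_3$).

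Where you go beyond the paper is the final paragraph, attempting an unconditional argument via twist fibrations, average-rank input, and uniform Mordell--Lang. As you yourself flag, this is speculative: current average-rank results do not give density-one emptiness of $D^{(f(t))}(\Q)$ along the specific thin one-parameter family $t\in\Q$, and uniform Mordell does not by itself exclude an infinite sparse set of $t$ contributing points. So the honest status is that your argument recovers and fleshes out the paper's claimed reduction to Bombieri--Lang, but does not establish the conjecture unconditionally---nor does the paper.
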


\noindent Note that one can reduce this to the Bombieri--Lang conjecture about rational points on the quotient of $C \times D$ by the diagonal involution.

Another potential direction of future research would be to understand whether a dense set of degree $d$ points in $X$ must come from a geometric source such as a correspondence or the existence of low gonality curves in $X$. Already for quadratic points on abelian varieties, the following is unknown:

\begin{question}\label{ques:A/Q}
Let $A$ be an abelian variety defined over $\mathbb{Q}$ which has Mordell--Weil rank $0$. Suppose that $2\in \delta(A/\mathbb{Q})$. Does $A$ contain a hyperelliptic curve defined over $\mathbb{Q}$?
\end{question}

As Kadets pointed out to us, it is expected that $2 \in \delta(A/\Q)$ for any abelian variety $A$ defined over $\Q$. In particular, Question~\ref{ques:A/Q} is closely related to:

\begin{question}
Does every abelian variety defined over $\Q$ contain a hyperelliptic curve?
\end{question}

\noindent Interestingly, the geometric methods from \S3 do not seem to apply. 

\noindent \textbf{Acknowledgements.} We would like to thank AIM for its hospitality and for providing a stimulating environment during the March 2024 workshop ``Degree $d$ points on algebraic surfaces".

We have learned about this subject from a wide variety of mentors, friends, and collaborators. In particular, we would like to thank Rob Lazarsfeld, Claire Voisin, Gian Pietro Pirola, David Stapleton, Sam Grushevsky, John Christian Ottem, Frank Gounelas, Bianca Viray, Benjamin Church, Louis Esser, Lena Ji, Borys Kadets, Federico Moretti, Giovanni Passeri, and Junyan Zhao.


\bibliographystyle{alpha}
\bibliography{biblio.bib}

\footnotesize{
\textsc{Department of Mathematics, Harvard University, Cambridge MA 02138} \\
\indent \textit{E-mail address:} \href{mailto:nathanchen@math.harvard.edu}{nathanchen@math.harvard.edu}

\textsc{Instituto de Matem\'{a}tica Pura e Aplicada, Rio de Janeiro, RJ 22460-320} \\
\indent \textit{E-mail address:} \href{mailto:olivier.martin@impa.br}{olivier.martin@impa.br}
}

\end{document}